\documentclass[reqno]{amsart}
\usepackage[T1]{fontenc}
\usepackage[utf8]{inputenc}
\usepackage[french, english]{babel}
\usepackage{amssymb, amsmath, color, textcomp, url}
\usepackage{enumerate}
\usepackage{comment}

\usepackage{xcolor}

\theoremstyle{plain}
\newtheorem{theorem}{Theorem}[section]
\newtheorem{cor}[theorem]{Corollary}
\newtheorem{prop}[theorem]{Proposition}
\newtheorem{lemma}[theorem]{Lemma}

\newtheorem{claim}[theorem]{Claim}
\newtheorem{obs}[theorem]{Observation}
\newtheorem{construction}[theorem]{Construction}

\theoremstyle{definition}
\newtheorem{remark}[theorem]{Remark}
\newtheorem{fact}[theorem]{Fact}
\newtheorem{definition}[theorem]{Definition}
\newtheorem{example}[theorem]{Example}

\newtheorem*{notation}{Notation}
\newtheorem*{ack}{Acknowledgments}

\newcommand{\nc}{\newcommand}

\nc{\Z}{\mathbb{Z}}
\nc{\N}{\mathbb{N}}
\nc{\Q}{\mathbb{Q}}
\nc{\K}{\mathbb{K}}
\nc\LL{\mathcal L}

\nc\CC{\mathcal C}

\nc{\acl}{\operatorname{acl}}
\nc{\aclq}{\operatorname{acl^{eq}}}
\nc{\dcl}{\operatorname{dcl}}
\nc{\Cb}{\operatorname{Cb}}
\nc{\lev}{\operatorname{\ell}}
\nc\inv{ ^{-1}}
\nc\Aut{\operatorname{Aut}}
\nc{\ad}{\operatorname{ad}}
\nc{\RM}{\operatorname{RM}}

\nc{\tp}{\operatorname{tp}}
\nc{\stp}{\operatorname{stp}}
\nc{\cf}{\text{cf.\xspace}}

\def\Ind#1#2{#1\setbox0=\hbox{$#1x$}\kern\wd0\hbox to 0pt{\hss$#1\mid$\hss}
\lower.9\ht0\hbox to 0pt{\hss$#1\smile$\hss}\kern\wd0}
\def\Notind#1#2{#1\setbox0=\hbox{$#1x$}\kern\wd0\hbox to
0pt{\mathchardef\nn="0236\hss$#1\nn$\kern1.4\wd0\hss}\hbox
to 0pt{\hss$#1\mid$\hss}\lower.9\ht0
\hbox to 0pt{\hss$#1\smile$\hss}\kern\wd0}

\newcommand{\oa}{\overline{a}}

\newcommand{\oc}{\overline{c}}

\newcommand{\oee}{\overline{e}}

\newcommand{\ox}{\overline{x}}
\newcommand{\oy}{\overline{y}}

\nc{\ldim}{\operatorname{ldim}}

\usepackage{tikz}
\usetikzlibrary{cd}

\newcommand{\la}{\langle}
\newcommand{\ra}{\rangle}

\newcommand{\sev}[1]{\la #1 \ra}

\newcommand{\bwsq}{\bigwedge^{\!2}\!}
\newcommand{\Bwsq}{\bigwedge^{{}_{{}_{\kern1em \scriptstyle 2}}}}

\begin{document}

\begin{abstract}
Based on Hrushovski, Palac\'{i}n and Pillay's example \cite{hrushovski2013canonical}, we produce a new structure without the canonical base property, which is interpretable in Baudisch's group. Said structure is, in particular, CM-trivial, and thus at the lowest possible level of the ample hierarchy. 
\end{abstract}

\title{CM-trivial Structures without the Canonical Base Property}
\date{\today}

\author{Thomas Blossier}
\author{Léo Jimenez}
\thanks{The second author was supported by GeoMod AAPG2019 (ANR-DFG), Geometric and Combinatorial Configurations in Model Theory}

\address{Universit\'e de Lyon; CNRS; Universit\'e Lyon 1; Institut Camille
Jordan UMR5208, 43 boulevard du 11
novembre 1918, F--69622 Villeurbanne Cedex, France}
\email{blossier@math.univ-lyon1.fr}

\address{Universit\'e de Lyon; CNRS; Universit\'e Lyon 1; Institut Camille
Jordan UMR5208, 43 boulevard du 11
novembre 1918, F--69622 Villeurbanne Cedex, France}
\email{jimenez@math.univ-lyon1.fr}
\maketitle

\tableofcontents{}

\section{Introduction}

In geometric stability, one is often interested in quantifying the complexity of forking in a given theory. An important example is one-basedness: a stable theory is one based if for any tuple $a$ and $b$, the canonical base $\Cb(\stp(a/b))$ is algebraic over $a$. This has very strong structural consequences, for example on definable groups, which must be abelian-by-finite. 

This is only the first step of a strictly increasing hierarchy of complexity: the ample hierarchy, introduced by Pillay in \cite{pillay2000note}. A theory can be $n$-ample for any $n \in \mathbb{N}$, and is $1$-ample if and only if it is not one-based. This was motivated by Hrushovski's construction of a new strongly minimal set \cite{hrushovski1993new}, which is $1$-ample, but not $2$-ample, and was the first such example. Because algebraically closed fields are $n$-ample for all $n$ \cite[Proposition 3.13]{pillay2000note} this also provided a counterexample to Zilber's trichotomy: a non one-based strongly minimal set not interpreting an algebraically closed field.

Non $2$-ampleness is also called CM-triviality in the literature, and is defined as follows: a theory $T$ is CM-trivial if whenever $A \subset B$ are parameters and $c$ is a tuple satisfying $\acl^{\mathrm{eq}}(c,A) \cap \acl^{\mathrm{eq}}(B) = \acl^{\mathrm{eq}}(A)$, then $\Cb(\stp(c/A)) \subseteq \Cb(\stp(c/B))$. 

Another way to generalize one-basedness is to introduce \emph{internality} in the definition. Recall that if $\mathcal{P}$ is a family of types, a stationary type $p \in S(A)$ is $\mathcal{P}$-internal (resp. almost $\mathcal{P}$-internal) if there is a set of parameters $C$, such that for any realization $a \models p$, there is a tuple $e$ of realizations of types in $\mathcal{P}$, each based over $C$, such that $a \in \dcl(e,C)$ (resp. $a \in \acl(e,C)$). Internality is essential to the understanding of superstable theories of finite rank, via the machinery of analysability: any type can be constructed as an iterated fibration, with $\mathcal{P}$-internal fibers at each step, where $\mathcal{P}$ is the family of Lascar rank one types. 

A relative version of one-basedness, inspired by the model theory of compact complex spaces, is the \emph{canonical base property}, which was implicitly studied in \cite{pillay2001remarks} and \cite{pillay2003jet}, and first formally defined in \cite{moosa2008canonical}. A superstable theory has the canonical base property (CBP) if (possibly working over some parameters) for any tuples $a,b$, if $\stp(a)$ has finite Lascar rank and $b = \Cb(\stp(a/b))$, then $\stp(b/a)$ is almost $\mathcal{P}$-internal, where $\mathcal{P}$ is the family of Lascar rank one types. One observes that the canonical base property is obtained by replacing "algebraic" with "almost $\mathcal{P}$-internal" in the definition of one-basedness. 

It was at first conjectured that all superstable structures of finite Lascar rank had the CBP, until Hrushovski, Palac\'{i}n and Pillay produced a counterexample \cite{hrushovski2013canonical}, which is interpretable in (and interprets) an algebraically closed field of characteristic zero. More recently, Loesch \cite{loesch2021possibly} has produced new structures without the CBP, which are conjectured to not be interpretable in the first one. 

Nevertheless, all the known examples interpret an algebraically closed field, and it is a natural extension of Zilber's trichotomy to ask if it is always the case. Moreover, the interaction between the CBP and the ample hierarchy, which are both based on generalizing one-basedness, is so far unknown. In the present article, we make progress in both of these directions by producing a CM-trivial structure that does not have the CBP. Thus, a structure without the CBP can exist at the lowest possible level of the ample hierarchy, and does not have to interpret an algebraically closed field. 

Our structure is interpretable in Baudisch's uncountably categorical group \cite{baudisch1996new}, but our methods are based on his second account of his construction \cite{baudisch2009additive} (itself inspired by methods developed in \cite{baudisch2006fusion} and \cite{baudisch2007red}). Said group  is constructed via a Hrushovski-Fraiss\'{e} amalgamation, and collapses, of finite-dimensional 2-nilpotent Lie algebras over a finite field, and was the first example of a CM-trivial superstable group. As a matter of fact, Baudisch's group is obtained from the amalgamated Lie algebra (which we will call Baudisch's Lie algebra), and we will work with the Lie algebra rather than the group. 

Our proof consists of a formal transposition of the techniques used by Hrushovski, Palac\'{i}n and Pillay in \cite{hrushovski2013canonical} to interpret a structure without the CBP in an algebraically closed field of characteristic zero. In said article, the authors carefully pick a cover of the complex numbers by their additive group to ensure that its automorphisms are given by derivations of the field. This in turn gives them enough flexibility to produce a configuration contradicting the CBP. 

Here, we will mimick their proof by first constructing derivations of Baudisch's Lie algebra. This is the technical heart of our article, and requires some elementary, but tedious, bilinear algebra. The rest of the proof is mostly routine, and a direct transposition of \cite{hrushovski2013canonical}: we pick a similar cover, and show that derivations of Baudisch's Lie algebra give rise to automorphisms of this structure. By using a criteria for the CBP first noticed in \cite{pillay2003jet}, we can copy the proof given in \cite{hrushovski2013canonical} to prove that our structure does not have the CBP.

There are still open questions regarding the canonical base property. First, the tantalizing conjecture of Hrushovski, Palac\'{i}n and Pillay that any structure interpretable in an algebraically closed field of positive characteristic has the canonical base property. At present, the authors do not see any reason to confirm or infirm this, except that the known cover constructions do not transpose to this case. Second, the existence of a structure without the CBP, and not interpreting a group. Geometric stability theory considerations show that such a structure cannot be $\aleph_1$-categorical, but there is no other known obstruction to its existence. Perhaps it could be interpretable in Hrushovski's original strongly minimal set.

The article is organized as follows: in Section 2, we give necessary preliminaries on both 2-nilpotent Lie algebras and Baudisch's construction. In Section 3, we construct many derivations on Baudisch's Lie algebra, using bilinear algebra considerations, and properties of Baudisch's construction. Finally, everything comes together in Section 4, where said derivations are used to produce a cover of Baudisch's Lie algebra without the canonical base property.

\begin{ack}

The authors would like to thank Frank Wagner for insightful discussions on the subject of this paper. The second author is also grateful to Anand Pillay for suggesting the construction of new counterexamples to the CBP as a dissertation project.

\end{ack}

\section{Preliminaries}

From now on, we denote by $\sev{A}$ the vector subspace generated by a subset $A$ in a vector space.

\subsection{$2$-nilpotent graded Lie algebras}

Before we start our journey into Baudisch's work \cite{baudisch1996new} and \cite{baudisch2009additive}, we will remind the reader of a few elementary definitions and results on 2-nilpotent Lie algebras, which will be central to our construction. 

\begin{definition}

Let $\mathbb{K}$ be a field. A $\mathbb{K}$-Lie algebra is a $\mathbb{K}$-vector space $\mathfrak{g}$ equipped with a bilinear map $[\cdot , \cdot] : \mathfrak{g} \times \mathfrak{g} \rightarrow \mathfrak{g}$, called the Lie bracket, that is:

\begin{itemize}
\item alternative, i.e $[x,x] = 0$ for all $x \in \mathfrak{g}$
\item anticommutative, i.e. $[x,y] = - [y,x]$ for all $x,y \in \mathfrak{g}$
\item satisfies the Jacobi identity, i.e. $[x,[y,z]] + [z,[x,y]] +[y,[z,x]] = 0$, for all $x,y,z \in \mathfrak{g}$.
\end{itemize}

\end{definition}

In Baudisch's construction, one considers graded $2$-nilpotent Lie algebras.

\begin{definition}

A $2$-nilpotent graded Lie algebra is a Lie algebra $\mathfrak{g}$, which is graded as a vector space, meaning $\mathfrak{g} = \mathfrak{g}_1 \oplus \mathfrak{g}_2$, and satisfies:

\begin{itemize}
\item $\la [\mathfrak{g}_1,\mathfrak{g}_1] \ra = \mathfrak{g}_2$
\item $[\mathfrak{g},\mathfrak{g}_2 ] = \{ 0 \}$
\end{itemize}
\end{definition}

A Lie algebra homomorphism is a $\mathbb{K}$-linear map preserving the Lie Bracket. Just as for commutative rings, kernels of homomorphisms will be exactly ideals:

\begin{definition}

Let $\mathfrak{g}$ be a $\mathbb{K}$-Lie algebra. A vector space $\mathfrak{h} \subseteq \mathfrak{g}$ is a subalgebra if it is preserved by the Lie bracket. It is an ideal if it moreover satisfies $[\mathfrak{g},\mathfrak{h}] \subseteq \mathfrak{h}$.

Given an ideal $\mathfrak{h} \subseteq \mathfrak{g}$, one can form the quotient Lie algebra $\mathfrak{g}/\mathfrak{h}$.

\end{definition}

\begin{remark}

Let $\mathfrak{g} = \mathfrak{g}_1 \oplus \mathfrak{g}_2$ be a $2$-nilpotent graded Lie algebra:
\begin{itemize}
\item any vector subspace $\mathfrak{g}'_1$ of $\mathfrak{g}_1$ generates a \emph{subgraded} algebra $\mathfrak{g}' =\mathfrak{g}'_1 \oplus \mathfrak{g}'_2$, i.e. a $2$-nilpotent graded subalgebra $\mathfrak{g}'$ such that $(\mathfrak{g'})_i =\mathfrak{g}_i \cap  \mathfrak{g}'$;
\item any vector subspace of $\mathfrak{g}_2$ is an ideal.

\end{itemize}

\end{remark}

In fact,  $2$-nilpotent graded Lie algebras correspond exactly to quotients of exterior squares:

\begin{definition}

Let $V$ be a vector space over a field $\mathbb{K}$. The exterior algebra $\bigwedge V$ of $V$ is defined as the quotient of the tensor algebra $T(V)$ by the (two-sided) ideal generated by $\{ x \otimes x , x \in V\}$. We denote by $x \wedge y$ the product in $\bigwedge V$, and call it the wedge product of $x$ and $y$.

The exterior square $\bwsq V$ is the vector space generated by $\{ x \wedge y, x,y \in V\}$.

\end{definition}

We will frequently state that a family of wedge products is free. When we do, it is a consequence of the following:

\begin{fact}

Let $\{ a_i, i \in I \}$ be a basis of $V$, and fix an ordering $<$ of $I$. Then $\{ a_i \wedge a_j, i<j \}$ is a basis of $\bwsq V$. 

\end{fact}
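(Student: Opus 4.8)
The plan is to prove two things: that the family $\mathcal{B} := \{a_i \wedge a_j : i<j\}$ spans $\bwsq V$, and that it is linearly independent.

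For spanning, recall that by definition $\bwsq V$ is spanned by all products $x \wedge y$ with $x,y \in V$. Writing $x = \sum_i \lambda_i a_i$ and $y = \sum_j \mu_j a_j$ as finite sums and using that $\wedge$ is bilinear, $x \wedge y = \sum_{i,j} \lambda_i \mu_j\, a_i \wedge a_j$. The defining relations of $\bigwedge V$ give $a_i \wedge a_i = 0$, and expanding $0 = (a_i + a_j)\wedge(a_i+a_j)$ yields $a_i \wedge a_j = -\,a_j \wedge a_i$; hence each summand with $i \neq j$ is $\pm$ a member of $\mathcal{B}$ and each summand with $i=j$ vanishes. So $\mathcal{B}$ spans $\bwsq V$.

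For independence I would use the universal property of the exterior square: every alternating bilinear map $\phi : V \times V \to W$ into a $\mathbb{K}$-vector space $W$ factors uniquely as $\phi = \bar\phi \circ \wedge$ for a linear map $\bar\phi : \bwsq V \to W$. Concretely this amounts to identifying $\bwsq V$ with $V \otimes V$ modulo the subspace spanned by $\{v \otimes v : v \in V\}$, which in turn follows from the fact that the ideal of $T(V)$ defining $\bigwedge V$ is generated by homogeneous elements of degree $2$, so its degree-$2$ component is exactly that subspace. Granting this, fix a pair $k<l$ and let $\phi_{kl} : V \times V \to \mathbb{K}$ be the bilinear map determined on basis pairs by $\phi_{kl}(a_k,a_l) = 1 = -\phi_{kl}(a_l,a_k)$ and $\phi_{kl}(a_i,a_j) = 0$ otherwise. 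A one-line check shows $\phi_{kl}(x,x)=0$ for all $x$ (the relevant cross terms cancel, in any characteristic), so $\phi_{kl}$ is alternating and induces $\bar\phi_{kl} : \bwsq V \to \mathbb{K}$ with $\bar\phi_{kl}(a_i \wedge a_j)$ equal to $1$ if $(i,j)=(k,l)$ and to $0$ otherwise, for $i<j$. Applying $\bar\phi_{kl}$ to a hypothetical relation $\sum_{i<j} c_{ij}\, a_i \wedge a_j = 0$ gives $c_{kl} = 0$; as $k<l$ was arbitrary, $\mathcal{B}$ is linearly independent.

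The only step that is not purely formal bilinear bookkeeping is the identification of $\bwsq V$ with $V\otimes V$ modulo $\sev{v \otimes v : v \in V}$ — that is, checking that passing to the degree-$2$ part of the quotient $T(V)/(x\otimes x)$ introduces no extra relations — which is where the grading of the defining ideal is used; everything else is routine. An equivalent packaging, bypassing explicit mention of the universal property, is to define a single linear map from $\bwsq V$ to $\bigoplus_{i<j}\mathbb{K}$ sending $x \wedge y$ to the tuple of $2\times 2$ minors $(x_i y_j - x_j y_i)_{i<j}$ of the coordinate vectors of $x$ and $y$, check it is well defined (the map $(x,y)\mapsto$ that tuple being bilinear and alternating), and observe it carries $\mathcal{B}$ bijectively onto the standard basis of $\bigoplus_{i<j}\mathbb{K}$.
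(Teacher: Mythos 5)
Your proof is correct, and it is the standard argument. Note that the paper states this as a \emph{Fact} without supplying any proof, so there is no paper proof to compare against; what you have written is exactly the expected justification. Both halves check out: the spanning argument is a routine bilinearity computation, and for independence your use of the universal property is sound, including the characteristic-independent verification that each $\phi_{kl}$ is alternating (the cross terms $\lambda_k\lambda_l$ and $\lambda_l\lambda_k$ cancel by construction rather than by dividing by $2$). You also correctly isolate the one nontrivial point --- that the degree-$2$ graded piece of the ideal $(v \otimes v)$ in $T(V)$ is precisely $\sev{v \otimes v : v \in V}$, which holds because the ideal is generated in degree $2$ --- and this is indeed the place where an argument is needed rather than bookkeeping.
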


From the exterior square, we can construct 2-nilpotent Lie algebras:

\begin{obs}

The vector space $V \bigoplus \bwsq V$ can be equipped with a Lie algebra structure by setting $[x,y] = x \wedge y$ for all $x,y \in V$, and $[x,[y,z]] = 0$ for all $y,z \in V$ and $x \in V \bigoplus \bwsq V$. This Lie algebra is $2$-nilpotent graded by construction. It is the free 2-nilpotent Lie algebra over $V$, denoted $F_2(V)$.

\end{obs}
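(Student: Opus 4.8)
The plan is to verify in turn the three Lie algebra axioms for the proposed bracket, then the two conditions defining a $2$-nilpotent graded Lie algebra, and finally the universal property that justifies the word "free".

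First I would make the bracket fully explicit. Writing an element of $V \oplus \bwsq V$ as $x = x_1 + x_2$ with $x_1 \in V$ and $x_2 \in \bwsq V$, bilinearity together with the two stated rules forces $[x,y] = x_1 \wedge y_1$, which is well defined precisely because the sum $V \oplus \bwsq V$ is direct. The alternating law $[x,x] = x_1 \wedge x_1 = 0$ and anticommutativity $[x,y] = x_1 \wedge y_1 = -\,y_1 \wedge x_1 = -[y,x]$ are then immediate from the corresponding properties of the wedge product. For the Jacobi identity, the key observation is that $[y,z] = y_1 \wedge z_1$ always lies in $\bwsq V$, and bracketing anything with an element of $\bwsq V$ is $0$ by construction; hence each of the three summands $[x,[y,z]]$, $[z,[x,y]]$, $[y,[z,x]]$ vanishes individually, so Jacobi holds trivially.

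Next, setting $\mathfrak{g}_1 = V$ and $\mathfrak{g}_2 = \bwsq V$, the two grading conditions are read off directly: $\langle [\mathfrak{g}_1,\mathfrak{g}_1] \rangle = \langle \{ x \wedge y : x,y \in V \} \rangle = \bwsq V = \mathfrak{g}_2$ by the definition of the exterior square, and $[\mathfrak{g},\mathfrak{g}_2] = 0$ by the definition of the bracket.

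The only step that requires a little care is freeness. Given a graded $2$-nilpotent Lie algebra $\mathfrak{h} = \mathfrak{h}_1 \oplus \mathfrak{h}_2$ and a linear map $f \colon V \to \mathfrak{h}_1$, I would produce a unique graded Lie algebra homomorphism $\tilde{f} \colon F_2(V) \to \mathfrak{h}$ with $\tilde{f}|_V = f$ (the ungraded variant is identical, using $[\mathfrak h,[\mathfrak h,\mathfrak h]] = 0$ in place of $[\mathfrak h, \mathfrak h_2] = 0$). Uniqueness holds because $F_2(V)$ is generated by $V$, which forces $\tilde{f}(x \wedge y) = [f(x), f(y)]$. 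For existence, the map $V \times V \to \mathfrak{h}$, $(x,y) \mapsto [f(x), f(y)]$, is bilinear and alternating, so by the universal property of the exterior square it factors through a linear map $g \colon \bwsq V \to \mathfrak{h}_2$ (the target being $\mathfrak h_2$ since $[\mathfrak h_1,\mathfrak h_1] \subseteq \mathfrak h_2$); one sets $\tilde{f}(v + w) = f(v) + g(w)$ for $v \in V$, $w \in \bwsq V$. That $\tilde{f}$ respects brackets is then checked on graded components: on $V \times V$ it is the definition of $g$, and as soon as one argument lies in $\bwsq V$ both sides are $0$, the right-hand side because $g(w) \in \mathfrak{h}_2$ and $[\mathfrak{h}, \mathfrak{h}_2] = 0$. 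I expect no real obstacle here; the only genuinely nontrivial ingredient is the universal property of $\bwsq V$, which is exactly what the construction is designed to exploit.
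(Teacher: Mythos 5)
The paper states this as an Observation without supplying a proof, treating it as standard bilinear-algebra bookkeeping; so there is no argument in the paper to compare against. Your verification is correct and is the expected one: the bracket collapses to $[x,y]=x_1\wedge y_1$, the three Lie axioms follow from the corresponding properties of $\wedge$ (with Jacobi holding term-by-term because every inner bracket lands in $\bwsq V$), the grading conditions are read off directly, and you correctly justify the word \emph{free} by establishing the universal property via the universal property of the exterior square. The one extra thing worth noting is that you prove slightly more than the paper asserts: you establish the universal property of $F_2(V)$, which the paper never explicitly states (it only uses it implicitly, e.g.\ in Remarks \ref{2-nilpotent-as-quotient} and \ref{rem_morphism}), so your proof also fills in the content behind those later remarks.
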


A trivial but essential remark, which is used heavily by Baudisch \cite{baudisch2009additive}, is the following:

\begin{remark}\label{2-nilpotent-as-quotient}
Consider a $2$-nilpotent graded Lie algebra $\mathfrak{g} =\mathfrak{g}_1 \oplus \mathfrak{g}_2$. The identity from $V = \mathfrak{g}_1 $ to  $\mathfrak{g}_1$ extends canonically to a Lie algebra morphism $\varphi$ from $F_2(V)$ onto $\mathfrak{g}$, which induces an Lie isomorphism between $\mathfrak{g}$ and $F_2(V)/N_\mathfrak{g}$ where $N_\mathfrak{g}$ is the kernel of $\varphi$.  
Note that 

\[N_\mathfrak{g} =\left\{\sum\limits_{i=1}^n \lambda_i x_i\wedge y_i \colon x_i,y_i \in V,\, \lambda_i \in \mathbb{K} \text{ such that } \sum\limits_{i=1}^n \lambda_i [x_i,y_i]=0\right\}.\]

Thus, one can identify $2$-nilpotent graded Lie algebras to Lie algebras of the form $V \oplus  \bwsq V\big/N$ where $V$ is a $\K$-vector space and $N$ a $\K$-vector subspace of $\bwsq V$. When there is no ambiguity, we denote $N(V)$ the considered ideal $N$ of $F_2(V)$.

Let $\mathfrak{g} = A \oplus \bwsq A\big/N(A)$ be a $2$-nilpotent graded Lie algebra and $B$ be a vector subspace of $A$. The subalgebra $\mathfrak{g}'$ generated by $B$ is isomorphic to  $B \oplus \bwsq B\big/N(B)$ where $N(B) = N(A) \cap  \bwsq B$.
\end{remark}

From linear maps on the first components of $2$-nilpotent graded Lie algebras, one can obtain Lie algebra morphisms:
\begin{remark}\label{rem_morphism} 
Let $\mathfrak{g} = U\oplus \bwsq U\big/N(U)$ and $\mathfrak{h} = V\oplus \bwsq V\big/N(V)$ be $2$-nilpotent graded Lie algebras.
\begin{itemize}
\item Any linear map $\sigma: U \rightarrow F_2(V)$ extends uniquely to a Lie algebra morphism $\sigma: F_2(U) \rightarrow F_2(V)$.
\item Moreover, if $\sigma(N(U)) \subseteq N(V)$, it induces by quotients a unique Lie algebra morphism $\sigma: \mathfrak{g} \rightarrow \mathfrak{h}$.
\end{itemize}
In the rest of the paper, we will consider only \emph{graded} Lie algebra morphisms, which are morphisms of the form $\sigma: \mathfrak{g} \rightarrow \mathfrak{h}$ such that $\sigma(U) \subseteq V$ or equivalently linear maps $\sigma: U \rightarrow V$ such that $\sigma(N(U)) \subseteq N(V)$.  
\end{remark}

Derivations play a key role in the study of Lie algebras, and will feature prominently in our construction: 

\begin{definition}

Let $\mathfrak{g}$ be a $\mathbb{K}$-Lie algebras. 

A derivation $\delta : \mathfrak{g} \rightarrow \mathfrak{g}$ is a $\mathbb{K}$-linear map satisfying the Leibniz law, that is $\delta([x,y]) = [\delta(x),y] + [x, \delta(y)]$ for all $x,y \in \mathfrak{g}$.

A partially defined derivation $\delta$ is a $\mathbb{K}$-linear map from a subalgebra $\mathfrak{g}'$ to  $\mathfrak{g}$ satisfying the Leibniz law (on elements in $\mathfrak{g}'$).

When $\mathfrak{g}$ is $2$-nilpotent graded and $\mathfrak{g}'$ is graded Lie subalgebra, we say that a partially defined derivation $\delta: \mathfrak{g}' \rightarrow \mathfrak{g}$ is \emph{graded} if $\delta( \mathfrak{g}'_1) \subseteq \mathfrak{g}_1$.

\end{definition}

Note that for any Lie algebra $\mathfrak{g}$ and $g \in \mathfrak{g}$, the application $\delta_g : x \rightarrow [x,g]$ is a derivation. However, in the case of a graded $2$-nilpotent Lie algebra, this derivation is \emph{not} graded. One can construct graded derivations just as graded morphisms were previously constructed:

\begin{remark}\label{rem-deriv}
Let $\mathfrak{g} = A \oplus \bwsq A\big/N(A)$ be a $2$-nilpotent graded Lie algebra, $B$ be a vector subspace of $A$, and $\mathfrak{g}'$ be the subalgebra generated by $B$.

\begin{itemize}
\item A linear map $f : B \rightarrow F_2(A)$ induces a unique partially defined derivation $\tilde f : F_2(B) \rightarrow F_2(A)$ such that $\tilde f_{\mid B} = f$.
\item A partially defined derivation $\delta: \mathfrak{g'} \rightarrow \mathfrak{g}$ is uniquely determined by the linear map $\delta_{\mid B} : B \rightarrow  \mathfrak{g}$. Moreover we have  $\tilde \delta_{\mid B}(N(B)) \subseteq N(A)$.
\item Reciprocally, a linear map $f : B \rightarrow F_2(A)$ such that $\tilde f (N(B)) \subseteq N(A)$ induces by quotients an unique partially defined derivation $\delta: \mathfrak{g}' \rightarrow \mathfrak{g}$.
\item If $f(B) \subseteq A$, the corresponding partially defined derivation is graded.
\end{itemize}
When there is no ambiguity, we will use the same notation for the linear map and the partially defined derivations. In such a setting, we will say that $f : B \rightarrow A$ is a partially defined derivation if $f$ is a linear map from $B$ to $A$ such that $f(N(B)) \subseteq N(A)$ (where $f$ denotes $\tilde f$ in the last inclusion).
\end{remark}

\subsection{Baudisch's group}

Our final structure will be based on Baudisch's group (or rather, the Lie algebra associated to it). In this section, entirely due to Baudisch, we recall how this algebra is constructed, and state results that we will use. This group was first constructed by Baudisch in \cite{baudisch1996new}, but we found his second article \cite{baudisch2009additive} easier to use for our purposes. We will freely refer to results and definitions from said article, and we encourage the reader to have it within reach.

Fix a finite field $\mathbb{F}_q$, with $q > 2$. 
In said article, Baudisch constructs an $\omega$-stable $2$-nilpotent graded $\mathbb{F}_q$-Lie algebra $M$ as the Hrushovski-Fraiss\'{e} limit of a class of finite $2$-nilpotent graded $\mathbb{F}_q$-Lie algebra. He then constructs a CM-trivial $2$-nilpotent graded $\mathbb{F}_q$-Lie algebra of Morley rank $2$ as a collapse $M_{\mu}$, which depends on a certain function $\mu$. 

To see 2-nilpotent graded $\mathbb{F}_q$-Lie algebras as first order structures, we will use an expansion of the language of $\mathbb{F}_q$-vector spaces by a binary function symbol $[\cdot,\cdot]$ for the Lie bracket, as well as two unary predicates for the degree one and degree two components. We denote by $L$ this language. We reserve the notation $\wedge$ for the Lie bracket in free $2$-nilpotent Lie algebras:

\begin{notation}

If $A \oplus \bigwedge^2 A /N(A)$ is a 2-nilpotent Lie algebra, we will often work in the free $2$-nilpotent Lie algebra $F_2(A)$. When we do, we will always use the wedge product notation. Thus, an equality of the form $\sum\limits_{i,j} [a_i,b_j] = 0$ in the Lie algebra $A \oplus \bigwedge^2 A /N(A)$ is equivalent to $\sum\limits_{i,j} a_i \wedge b_j \in N(A)\subseteq \bigwedge^2 A $.

We will say that $A \oplus \bigwedge^2 A /N(A)$ satisfies $\sum\limits_{i,j} a_i \wedge b_j \in N(A)$, even though this is not, strictly speaking, a formula in $L$.

\end{notation}

Let us recall how Baudisch's class of Lie algebras, as well as their predimension function, are defined. 

\begin{definition}[Predimension]

For any finite $2$-nilpotent graded Lie algebra $\mathfrak{g} = A \oplus \bwsq A \big/N(A)$, we let $$\delta(\mathfrak{g}) = \ldim(A) - \ldim(N(A))$$
where $\ldim$ denotes the $\mathbb{F}_q$-linear dimension.

When we are in an ambient $2$-nilpotent Lie algebra $V \oplus \bwsq V \big/N(V)$, for any finite vector subspace $B$ of $V$, we simply denote by $\delta(B)$ the predimension of the subalgebra generated by $B$. 

Note that $\delta(B) = \ldim(B) - \ldim(N(B))$ where $N(B) = N(V) \cap \bwsq B$.

More generally, the relative predimension is defined for vector subspaces $C \subseteq B$ of $V$ such that $B$ is finitely generated over $C$ by 
\[\delta(B/C) = \ldim(B/C) - \ldim(N(B)/N(C)).\]

\end{definition}

\begin{obs}[Sub-modularity]
The predimension is sub-modular: for two vector subspaces $A$ and $B$ such that $A$ is finitely generated over $A \cap B$, we have
\[\delta(A+B/B) \leq \delta(A/A\cap B).\]
\end{obs}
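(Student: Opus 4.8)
The plan is to reduce the inequality to a dimension count about the spaces $N(\cdot)$, and then exhibit an explicit linear injection. Write $C = A\cap B$. By the second isomorphism theorem for vector spaces, the inclusion $A \hookrightarrow A+B$ induces an isomorphism $A/C \xrightarrow{\ \sim\ } (A+B)/B$, so that $\ldim(A/C) = \ldim((A+B)/B)$. Unravelling the definition of the relative predimension, and recalling that $N(D) = N(V)\cap\bwsq D$ for any subspace $D$ of the ambient space $V$, the desired inequality $\delta(A+B/B) \le \delta(A/C)$ is therefore equivalent to
\[ \ldim\bigl(N(A+B)/N(B)\bigr) \ \geq\ \ldim\bigl(N(A)/N(C)\bigr). \]

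The key linear-algebra input I would isolate is the identity $\bwsq A \cap \bwsq B = \bwsq C$ inside $\bwsq(A+B)$. To prove it, choose a basis $(e_i)_{i\in I}$ of $C$, extend it by $(f_j)_{j\in J}$ to a basis of $A$ and by $(g_k)_{k\in K}$ to a basis of $B$; then $(e_i)\cup(f_j)\cup(g_k)$ is a basis of $A+B$. Fixing a compatible ordering and applying the Fact on bases of exterior squares, $\bwsq(A+B)$, $\bwsq A$, $\bwsq B$ and $\bwsq C$ are each spanned by the evident subfamilies of wedge products $x\wedge y$ of basis vectors; comparing which pairs occur in $\bwsq A$ and in $\bwsq B$ shows that the common ones are exactly those with both factors among the $e_i$, giving $\bwsq A \cap \bwsq B = \bwsq C$. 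Intersecting with $N(V)$ yields $N(A)\cap N(B) = N(C)$.

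Now consider the composite linear map $N(A) \hookrightarrow N(A+B) \twoheadrightarrow N(A+B)/N(B)$, where the first arrow is the restriction of the inclusion $\bwsq A \subseteq \bwsq(A+B)$ (legitimate since $N(A) = N(V)\cap\bwsq A \subseteq N(V)\cap\bwsq(A+B) = N(A+B)$) and the second is the canonical projection. Its kernel is $N(A)\cap N(B) = N(C)$, so it factors through an injection $N(A)/N(C) \hookrightarrow N(A+B)/N(B)$, which is precisely the displayed dimension inequality; combining with $\ldim((A+B)/B) = \ldim(A/C)$ finishes the proof. There is no genuine obstacle here: the only non-formal point is the identity $\bwsq A \cap \bwsq B = \bwsq C$, and once a jointly compatible basis of $A+B$ is fixed it is immediate bookkeeping. (If $C$ is infinite-dimensional the left-hand quantity may be infinite, in which case the inequality holds trivially; the finiteness hypotheses in the definition guarantee the quantities are the intended ones.)
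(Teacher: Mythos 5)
Your proof is correct. The paper states sub-modularity as an unproved observation, and your argument is exactly the standard one one would supply: reduce via the second isomorphism theorem to comparing $\ldim(N(A+B)/N(B))$ with $\ldim(N(A)/N(A\cap B))$, and then obtain the needed inequality from the injection $N(A)/N(A\cap B)\hookrightarrow N(A+B)/N(B)$, using the key identity $\bwsq A\cap\bwsq B=\bwsq(A\cap B)$ (equivalently $N(A)\cap N(B)=N(A\cap B)$), which your basis argument establishes cleanly.
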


\begin{definition}[Strongness]
When working in an ambient $2$-nilpotent Lie algebra $V \oplus \bwsq V \big/N(V)$, for any vector subspaces $B \subseteq A$ of $V$, we say that $B$ is self-sufficient or strong in  $A$ (denoted by $B \leq A$), if for any vector subspace $B \subseteq C \subseteq A$, finitely generated over $B$, we have $\delta(C/B) \geq 0$. 

\end{definition}

Note that because of submodularity, the intersection of two subspaces that are strong in $V$ is also strong. Thus, given $B \subset V$, there is a smallest strong subspace of $V$ that contains $B$, namely the intersection all strong subspaces of $V$ containing $B$. We call it the self-sufficient closure of $B$.

We say that $A \supsetneq B$ is \emph{a minimal strong extension} of $B$ if $B \leq  A$ and there is no vector subspace $B \subsetneq C \subsetneq A$ such that $C \leq A$.

\begin{definition}[Class $\mathcal{K}$]
The class $\mathcal{K}$ is made of all  $2$-nilpotent Lie algebra $V \oplus \bwsq V \big/N(V)$ such that:
\begin{itemize}
\item $[v,w] \neq 0$ for any linearly independent $v,w$ in $V$;
\item $\mathbb{F}_q \cdot v \leq V$ for all $v$ in $V$.
\end{itemize}
\end{definition}

Note that the second assumption is equivalent to $\delta(A) \geq 1$ for any finite non-trivial vector subspace of $V$.

In the class $\mathcal{K}$, there are three possibilities for minimal strong extensions:

\begin{itemize}
\item Transcendental. In that case, we have $\ldim(A) = \ldim(B) + 1$ and $\delta(A) = \delta(B +1)$
\item Algebraic. In that case, we have $\ldim(A) = \ldim(B) + 1$ and $\delta(A) = \delta(B)$. This forces the existence of $a \in A \setminus B$ such that $A = B \oplus \la a \ra$ and $N(A) = N(B) \oplus \la a \wedge b + c \ra$, for $b \in B$ and $c \in \bwsq B$. 
\item Prelagebraic. This contains all the other cases, meaning that $\ldim(A) > \ldim(B) + 1$ and $\delta(A) = \delta(B)$.

\end{itemize}

\begin{example}

Let $B = \la b_0,b_1,b_2 \ra$ be a 3-dimensional $\mathbb{F}_q$-vector space, and consider the Lie graded 2-nilpotent algebra $F_2(B)$. We construct an extension of $B$ of each type, using linearly independent elements $a_0$, $a_1$ over $B$.

\begin{itemize}
\item Let $A_{\mathrm{tr}} = B \oplus \sev{a_0} $, and consider the Lie algebra $F_2(A_{\mathrm{tr}})$, this is a transcendental extension of $B$.
\item Let $A_{\mathrm{alg}} = B \oplus \sev{a_0}$, and consider the Lie algebra $F_2(A_{\mathrm{alg}})/N(A_{\mathrm{alg}})$, with $N(A_{\mathrm{alg}}) = \la a_0 \wedge b_0 + b_1 \wedge b_2 \ra$. This is an algebraic extension of $B$.
\item Let $A_{\mathrm{pr}} = B \oplus \la a_0, a_1 \ra$, and consider the Lie algebra $F_2(A_{\mathrm{pr}})/N(A_{\mathrm{pr}})$, with $N(A_{\mathrm{pr}}) = \la a_0 \wedge b_0 + a_1 \wedge b_1 , a_0 \wedge a_1 + b_1 \wedge b_2 \ra$. This is a prealgebraic extension of $B$.
\end{itemize}

\end{example}

A key property is

\begin{theorem}\cite[Theorems 8.3 and 8.4]{baudisch2009additive}

The subclass $\mathcal{K}_{fin}$ of finite $L$-structures in $\mathcal{K}$ has the amalgamation property with respect to strong embeddings. Therefore, there is a unique up to isomorphism countable $L$-structure $M = V \oplus \bwsq V /N(V)$ in $\mathcal{K}$, constructed by Hrushovski-Fraiss\'{e} amalgamation, that is rich, meaning:
if $B\leq V$ is a finite self-sufficient subspace of $V$ and $A \geq B$ is a finite strong extension of $B$ in $\mathcal{K}$, there is a strong embedding $f$ of $A$ in $V$ over $B$.
\end{theorem}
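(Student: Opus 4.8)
The statement packages two things: the amalgamation property for $\mathcal{K}_{fin}$ and the fact that amalgamation yields a countable rich limit. I would prove it by the usual Hrushovski--Fra\"iss\'e machinery, the only genuinely non-formal input being the verification that a free amalgam stays in $\mathcal{K}$, with the two factors sitting strongly inside it.

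First I would collect the routine facts about strongness and about the class. The class $\mathcal{K}_{fin}$ is closed under passing to the subalgebra generated by a subspace (both defining conditions of $\mathcal{K}$ are inherited, using $N(B) = N(A) \cap \bwsq B$), and it has only countably many isomorphism types, since a finite member is determined up to isomorphism by $\ldim V$ together with the subspace $N \subseteq \bwsq V$. On the strongness side one checks: $0 \leq A$ for every $A \in \mathcal{K}$ (so $0 \in \mathcal{K}$ and joint embedding will follow from amalgamation); transitivity $A \leq B \leq C \Rightarrow A \leq C$ and composition of strong embeddings, both via submodularity and additivity of $\delta$ along chains; that a finite subspace $B$ has a \emph{finite} self-sufficient closure (minimise $\delta(C)$ over finite $C \supseteq B$ --- this is bounded below, as $\delta(C) \geq 1$ for $C \neq 0$ --- then minimise dimension among the minimisers); and the chain property, that in an increasing union $B_0 \leq B_1 \leq \cdots$ each $B_n$ is strong in $\bigcup_i B_i$, since a finitely generated intermediate subspace already lies in some $B_m$.

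Next, amalgamation. Given $B \leq A_1$ and $B \leq A_2$ in $\mathcal{K}_{fin}$, I would form the \emph{free amalgam}: let $D_1$ be the vector-space pushout of $(A_1)_1 \leftarrow B_1 \rightarrow (A_2)_1$, so that $(A_1)_1 \cap (A_2)_1 = B_1$ inside $D_1$, and set $D = D_1 \oplus \bwsq D_1 \big/ N(D)$ with $N(D) = N(A_1) + N(A_2) \subseteq \bwsq D_1$. Choosing a basis of $D_1$ adapted to $B_1 \subseteq (A_1)_1,(A_2)_1$ and applying the Fact on bases of exterior squares gives $\bwsq (A_1)_1 \cap \bwsq (A_2)_1 = \bwsq B_1$, hence $N(A_1) \cap N(A_2) = N(B)$, and then (by the modular law for subspaces) $N(D) \cap \bwsq (A_i)_1 = N(A_i)$ --- so each $A_i$ is a graded subalgebra of $D$ and the two embeddings agree on $B$. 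The predimension computation, using freeness of the pushout and the identities just noted, then yields for any $A_2 \subseteq C \subseteq D$ finitely generated over $A_2$ the inequality $\delta(C/A_2) \geq \delta\big((C \cap A_1)/B\big) \geq 0$, the last step because $B \leq A_1$; so $A_2 \leq D$, and symmetrically $A_1 \leq D$. The remaining --- and main --- task is to check $D \in \mathcal{K}$: that $[v,w] \neq 0$ for $\mathbb{F}_q$-linearly independent $v,w \in D_1$, i.e. $v \wedge w \notin N(A_1)+N(A_2)$, and that $\delta(A) \geq 1$ for every non-trivial finite subalgebra $A$ of $D$. I expect this to be where the actual work lies: it requires analysing elements of $\bwsq D_1$ in a basis adapted to a decomposition $D_1 = (A_2)_1 \oplus W$ with $W \cong (A_1)_1/B_1$, reducing to the corresponding (non-)membership statements inside $A_1$ and $A_2$, which hold because $A_i \in \mathcal{K}$; it is presumably also the point where any arithmetic hypothesis on $q$ would be used.

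Finally, the rich limit. I would build $M = \bigcup_n M_n$ as an increasing chain of finite strong subalgebras, dovetailing over all tasks, i.e. pairs $(B,A)$ with $B \leq M_n$ finite and $B \leq A$ a finite strong extension in $\mathcal{K}$. To handle one task, note $B \leq M_n$ is already self-sufficient, and apply the amalgamation property to the two strong extensions $B \leq M_n$ and $B \leq A$ to obtain $M_{n+1} \in \mathcal{K}_{fin}$ with $M_n \leq M_{n+1}$ and a strong copy of $A$ over $B$ inside $M_{n+1}$. Then $M \in \mathcal{K}$ (an increasing union of members of $\mathcal{K}$, whose defining conditions are finitary), and $M$ is rich by construction together with the chain property. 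For uniqueness I would run a back-and-forth between two rich structures $M,M'$: a partial isomorphism between finite strong subalgebras extends to absorb any prescribed new element, because that element and the current domain generate a finite subalgebra whose self-sufficient closure is a finite strong extension, which richness of the target lets us copy strongly; symmetrising over both sides produces an isomorphism $M \cong M'$.
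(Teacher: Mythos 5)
First, note that the paper does not itself prove this theorem: it is stated with a direct citation to Baudisch, and no proof is given. So there is no in-paper proof to compare against; what follows is a review of your argument on its own.

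The Hrushovski--Fra\"iss\'e scaffolding you lay out (closure facts about $\delta$ and $\leq$, the chain construction of $M$ dovetailing over tasks, back-and-forth for uniqueness) is correctly outlined and standard. The genuine gap is in the amalgamation step: the free amalgam $D$, i.e. the vector-space pushout $D_1$ with $N(D) = N(A_1) + N(A_2)$, does \emph{not} lie in $\mathcal{K}$ in general, so the ``remaining task to check $D \in \mathcal{K}$'' is not a verification that can succeed, and the proposed reduction to (non-)membership statements inside $A_1$ and $A_2$ separately cannot work. Concretely, take $B = \langle b_0,b_1,b_2\rangle$ with $N(B)=0$, and for $i=1,2$ let $A_i = B \oplus \langle a_i\rangle$ with $N(A_i) = \langle a_i \wedge b_0 + b_1\wedge b_2\rangle$; each $B \leq A_i$ is an algebraic strong extension lying in $\mathcal{K}$ (the defining $2$-form has full rank $4$, hence is indecomposable and supported on no proper subspace --- this is exactly the paper's worked example of an algebraic extension). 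In the free amalgam we get $(a_1-a_2)\wedge b_0 = (a_1\wedge b_0 + b_1\wedge b_2) - (a_2\wedge b_0 + b_1\wedge b_2) \in N(D)$, i.e. $[a_1-a_2,b_0]=0$ with $a_1-a_2$ and $b_0$ linearly independent; so $D \notin \mathcal{K}$. The obstruction element $a_1-a_2$ lies in neither $A_1$ nor $A_2$, which is precisely why the reduction you describe fails. The correct amalgam in this example is $A_1$ itself, with $A_2 \to A_1$ the isomorphism over $B$ sending $a_2 \mapsto a_1$: one must \emph{identify} duplicated algebraic extensions rather than freely adjoin them. In general, the amalgamation proof must first absorb into $B$ (or quotient out of $D$) all such coincidences between algebraic elements appearing on both sides, and then verify that the resulting structure is still a strong amalgam in $\mathcal{K}$; this case analysis is the substantive content that your sketch omits. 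Once the amalgamation lemma is secured in this stronger form, the chain and back-and-forth portions of your argument go through as written.
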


Note that because we imposed, for any $A \in \mathcal{K}$, that $[a,b] \neq 0$ for any linearly independent $a,b \in A$, any algebraic extension in $V$ is algebraic in the model-theoretic sense. However, this is not the case for prealgebraic extensions, which are of Morley rank one over their basis. This is why $V$ and $M$ have respectively Morley rank $\omega$ and $\omega \cdot 2$: we can take successive prealgebraic extensions, and reach any finite Morley rank in $V$.

We will denote $T$ the theory of the $L$-structure $M$. Note that $M$ is the countable saturated model of $T$ (\cite[Theorem 8.6]{baudisch2009additive}).

As we want a finite rank structure, what is needed is to \emph{collapse} this structure, meaning force the prealgebraic extensions to become algebraic. As is classical in Hrushovski constructions, this is done via a set of \emph{codes} for prealgebraic extensions. More precisely, Baudisch consider in \cite{baudisch2009additive} a set of \emph{good codes}, which is a set $(\varphi_\alpha(\ox, y))_{\alpha \in \mathcal{C}}$ of $L^{\mathrm{eq}}$-formula, with in particular the  following properties (and many others useful for the construction):
\begin{itemize}
\item $\ox$ is a $n$-tuple of variables in the first predicate of $L$ (that is, a tuple of elements of degree 1) and $y$ is an imaginary variable,
\item for $b \in M^{eq}$, either the formula $\varphi_\alpha(V^n,b)$ is empty or $b$ is the canonical parameter of $\varphi_\alpha(\ox,b)$ in $T$,
\item there exist $n$ terms 
\[\Phi_i(\ox,\oy_i,z_i) = \sum_{j<k<n} \lambda_{ijk} x_j \wedge x_k + \sum_{j<n} y_{ij} \wedge x_j + z_i\]
where $\oy_i$ is a tuple of variables in the first predicate and $z_i$ a variable of the second predicate, which describe up to isomorphims a prealgebraic extension in the following sense:

if $b \in \dcl^{eq}(B)$ for a vector subspace $B$ of $V$ (and $\varphi_\alpha(V^n,b)\neq \emptyset$) there exist $c_{ij} \in B$ and $\psi_i \in \bwsq B$ such that
for any $\oa \in \varphi_\alpha(V^n,b)$:
\begin{itemize}
\item $\Phi_i(\oa,\oc_i,\psi_i) \in N(V)$,
\item if $\oa$ is \emph{$w$-generic} over $B$, that is $\oa$ is linearly independent over $B$ and $\delta(\sev{\oa,B}/B)=0$, then $\sev{\oa,B}$ is a prealgebraic minimal extension of $B$ where
\[N(\sev{\oa,B}) = N(B) \oplus \sev{\Phi_i(\oa,\oc_i,\psi_i) \colon i<n}.\]
\end{itemize}
\end{itemize}

Then using a notion of \emph{difference sequences} for good codes, a subclass $\mathcal{K}^\mu$ of $\mathcal{K}$ is defined for any \emph{good map} $\mu: \mathcal{C} \rightarrow \mathbb{N}$, such that:
\begin{theorem}[\cite{baudisch2009additive,baudisch1996new}]
The subclass $\mathcal{K}^{\mu}_{fin}$ of finite $L$-structures in $\mathcal{K}^{\mu}$ has the amalgamation property with respect to strong embeddings and the theory $T^\mu$ of the countable rich structure $M_\mu= V_\mu \oplus \bwsq V_\mu\big/ N(V_\mu)$ of $\mathcal{K}^{\mu}$ is uncountably categorical of Morley rank $2$, with $V_\mu$ being strongly minimal. Moreover $T^\mu$ is $CM$-trivial.  
\end{theorem}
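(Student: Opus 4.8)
The plan is to follow the Hrushovski amalgamation-with-collapse strategy, exactly as carried out by Baudisch in \cite{baudisch2009additive}; I only sketch the main lines.

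\emph{Amalgamation and the uncollapsed limit.} First I would verify that $\mathcal{K}_{fin}$ has amalgamation over strong embeddings. Given strong embeddings $B \leq A_1$ and $B \leq A_2$ in $\mathcal{K}_{fin}$, one forms the \emph{free amalgam} $A_1 \ast_B A_2$, whose degree-one part is the vector-space pushout $(A_1)_1 \oplus_{B_1} (A_2)_1$ and whose defining ideal is the subspace of $\bwsq((A_1)_1 \oplus_{B_1}(A_2)_1)$ generated by $N(A_1) \cup N(A_2)$. Submodularity of $\delta$, together with the freeness of the amalgam, forces $A_i \leq A_1 \ast_B A_2$ for $i=1,2$, and the two axioms of $\mathcal{K}$ (non-degeneracy of the bracket on linearly independent pairs, and $\delta(C) \geq 1$ for non-trivial finite $C$) are preserved; amalgamating over the zero subspace also gives joint embedding. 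Standard Hrushovski-Fraiss\'e theory then produces the countable rich $M \in \mathcal{K}$, unique up to isomorphism, with $\operatorname{Th}(M) = T$ being $\omega$-stable and non-forking controlled by the self-sufficient closure $\operatorname{scl}(\cdot)$.

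\emph{The collapse.} This is the technical heart. Using the good codes $(\varphi_\alpha(\ox, y))_{\alpha \in \mathcal{C}}$ and their associated terms $\Phi_i$, one cuts down to the subclass $\mathcal{K}^\mu \subseteq \mathcal{K}$ of algebras in which, for every code $\alpha$ and every finite strong $B$, the realizations over $B$ of the prealgebraic extension coded by $\alpha$ never form a \emph{difference sequence} of length exceeding $\mu(\alpha)$ --- this being the notion that makes ``too many essentially disjoint copies'' precise. The key combinatorial lemma, and the place where the goodness of $\mu$ (its growth rate relative to the arity and complexity of the codes) is used, is that a difference sequence of length $> \mu(\alpha)$ inside a free amalgam $A_1 \ast_B A_2$ can, by a pigeonhole-and-exchange argument sorting the terms according to which factor they essentially live in, be refined to a difference sequence of comparable length entirely inside $A_1$ or entirely inside $A_2$. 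Hence $\mathcal{K}^\mu_{fin}$ is again closed under free amalgamation and has the amalgamation property with respect to strong embeddings, and its Hrushovski-Fraiss\'e limit $M_\mu = V_\mu \oplus \bwsq V_\mu / N(V_\mu)$ exists and is rich.

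\emph{Model theory of $T^\mu$.} Finally I would read off the stated properties of $T^\mu = \operatorname{Th}(M_\mu)$. Richness over finite strong subsets is expressible by a first-order axiom scheme, so all rich models are elementarily equivalent and $T^\mu$ is complete; and since $\mathcal{K}^\mu$ admits no infinite tower of prealgebraic extensions, the only moves that raise $\delta$ are transcendental, so an uncountable rich model is determined up to isomorphism by a single cardinal --- the $\delta$-dimension of a maximal generic strong subspace --- giving uncountable categoricity. The same absence of prealgebraic towers makes the pregeometry of $\delta$-closure on the degree-one part strongly minimal, so $V_\mu$ is strongly minimal; and $M_\mu$ has Morley rank $2$ because every element lies in the definable closure of a finite tuple from $V_\mu$, with a short rank computation on the degree-two part $\bwsq V_\mu / N(V_\mu)$ pinning the value at $2$. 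CM-triviality then follows from the description of non-forking --- $A \ind_C B$ iff $\operatorname{scl}(AC) \cap \operatorname{scl}(BC) = \operatorname{scl}(C)$ with no relations crossing the amalgam --- combined with submodularity of $\delta$: this is the now-standard verification that geometries built from a submodular predimension are CM-trivial, and I expect it to go through once the independence calculus for $T^\mu$ is in place.

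The step I expect to be the genuine obstacle is the collapse: showing that difference sequences cannot be lengthened by free amalgamation, which is where Baudisch's careful choice of codes and of the growth of $\mu$ does all the work. Everything else is either soft Hrushovski-Fraiss\'e theory or a routine transcription of standard geometric stability arguments.
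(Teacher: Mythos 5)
The paper does not prove this statement: it is Baudisch's theorem, stated with a citation to \cite{baudisch2009additive,baudisch1996new}, and the authors rely on it as a black box. So there is no ``paper's own proof'' to compare against; what you have written is a reconstruction of Baudisch's argument.

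As such a reconstruction, your sketch correctly identifies the overall architecture --- free amalgamation over strong embeddings, the uncollapsed rich limit, good codes and difference sequences, the bound $\mu$, a second round of amalgamation and Hrushovski--Fra\"iss\'e, and first-order axiomatizability of richness. You are also right that the genuine obstacle is the collapse: the paper itself singles out Baudisch's \emph{Bounds for Difference Sequences} (\S 5 of \cite{baudisch2009additive}, quoted as Fact~2.16 in the paper) as the decisive input. But the CM-triviality step is not as you describe. There is no general theorem that ``geometries built from a submodular predimension are CM-trivial''; submodularity is far from sufficient, and plenty of ample structures admit submodular predimensions. Hrushovski-type ab initio constructions are CM-trivial essentially because the relevant geometries are \emph{flat}, and even granting flatness of the pre-collapse geometry, the argument for the collapsed, $\aleph_1$-categorical $M_\mu$ is a separate and substantial piece of Baudisch's work --- indeed the paper emphasizes that $M_\mu$ was the \emph{first} example of a CM-trivial superstable group, which is hard to square with the claim that CM-triviality is a routine transcription. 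Similarly, your description of the difference-sequence argument as ``pigeonhole-and-exchange'' badly undersells it, and the Morley rank $2$ computation, while in the right spirit, needs an actual argument (it is not immediate that the degree-two part contributes exactly one to the rank). These are gaps in a proof of a cited theorem rather than in the paper; still, you should be careful not to present CM-triviality as folklore, because that is precisely the delicate part that this paper's whole contribution depends on.
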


The \emph{Bounds for Difference Sequences} \cite[Section 5]{baudisch2009additive} is central in the above construction. In particular there is a characterization of minimal extensions which do not belong in $\mathcal{K}^\mu$ \cite[Corollary 5.3]{baudisch2009additive}. In order to extend derivations, we will need only the following fact that one can deduce directly from this characterization:

\begin{fact}\label{fact_out_Kmu}
Let $D$ be in $\mathcal{K}_{\mu}$ and $D \leq D'$ be a minimal extension in $\mathcal{K} \setminus \mathcal{K}_{\mu}$. Then the extension $D'$ is prealgebraic, and there is a good code $\alpha \in \mathcal{C}$ such that one of the following holds:

\begin{enumerate}[(a)]
\item $D'= D+ \sev{\oee}$ for a $w$-generic realization $\oee$ over $D$ of $\varphi_{\alpha}(\oee,b)$ where $b \in \dcl^{eq}(D)$, and there is at least one realization $\oee'$ in $D$ of $\varphi_{\alpha}(\ox,b)$;
\item There is a vector subspace $D \subseteq E \subseteq D'$ and $b \in \dcl^{eq}(E)$, with at least two realizations $\oee$ and $\oee'$ in $D'$ of $\varphi_{\alpha}(\ox,b)$ where $\oee$ is $w$-generic over $E$ and $\oee'$ is $w$-generic over $E+\sev{\oee}$.
\end{enumerate}
\end{fact}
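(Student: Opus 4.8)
The plan is to deduce Fact~\ref{fact_out_Kmu} from Baudisch's characterization of minimal extensions lying outside $\mathcal{K}^\mu$, namely \cite[Corollary 5.3]{baudisch2009additive}, by unwinding what it says in terms of good codes and difference sequences. First I would recall that since $D \in \mathcal{K}_\mu$ and $D \le D'$ is minimal in $\mathcal{K} \setminus \mathcal{K}_\mu$, the only obstruction to membership in $\mathcal{K}^\mu$ is a violation of the bound imposed by $\mu$ on the number of terms in a difference sequence of some good code $\alpha \in \mathcal{C}$; in particular, an algebraic or transcendental minimal extension never creates such a violation (these do not involve codes at all), so $D'$ must be prealgebraic. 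This gives the first assertion immediately.

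Next I would analyze the minimal $\mu$-violating configuration. By \cite[Corollary 5.3]{baudisch2009additive}, failure of the $\mu$-bound means there is a good code $\alpha$ and, inside $D'$, a difference sequence for $\alpha$ of length exceeding $\mu(\alpha)$ relative to some self-sufficient set over which the code is based; since $D$ already lies in $\mathcal{K}_\mu$, the extension $D \le D'$ must contribute the "extra" term that pushes the count over the bound. Translating "difference sequence" into the language of the $w$-generic realizations of $\varphi_\alpha(\ox,b)$: either the new term $\oee$ generating $D'$ over $D$ is a $w$-generic realization of $\varphi_\alpha(\ox,b)$ for some $b \in \dcl^{eq}(D)$ while $D$ itself already contains a realization $\oee'$ of $\varphi_\alpha(\ox,b)$ (so $D'$ has one more realization than the bound allows) --- this is case (a); or the two "new" realizations $\oee,\oee'$ witnessing the violation are both outside $D$, in which case by minimality $D'$ is generated over some intermediate $D \subseteq E \subseteq D'$ by $\oee$, then $\oee'$, each $w$-generic over the preceding subspace, with the code based on a parameter $b \in \dcl^{eq}(E)$ --- this is case (b). The minimality of the extension $D \le D'$ is what forces $D' = D + \sev{\oee}$ in case (a) and what forces the chain $E$, $E + \sev{\oee}$, $E + \sev{\oee} + \sev{\oee'} = D'$ in case (b) to have no room for further intermediate strong subspaces.

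The main obstacle I anticipate is purely expository/bookkeeping: matching the precise combinatorial content of \cite[Corollary 5.3]{baudisch2009additive} --- which is phrased in terms of difference sequences, their length, and the good map $\mu$ --- to the clean two-case dichotomy stated in the Fact, and in particular verifying that in case (b) the parameter $b$ can indeed be taken in $\dcl^{eq}(E)$ rather than merely in $\dcl^{eq}(D')$, and that no third realization is needed. I expect this to follow from the definition of good codes (the canonical parameter $b$ of $\varphi_\alpha(\ox,b)$ lies in $\dcl^{eq}$ of any subspace containing the relevant $c_{ij}$ and $\psi_i$ data) together with the minimality hypothesis, but it requires care. No genuinely new argument is needed beyond a faithful reading of Baudisch's Section~5; the Fact is essentially a repackaging of his corollary tailored to the derivation-extension arguments of Section~3.
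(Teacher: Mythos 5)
Your proposal takes the same route as the paper: the paper states this Fact without proof, explicitly asserting that it "can be deduced directly from" Baudisch's characterization in \cite[Corollary 5.3]{baudisch2009additive}, and your plan is precisely to unpack that corollary into the two-case dichotomy. Your reading of the cases — (a) the new $w$-generic realization over $D$ together with a pre-existing realization inside $D$, versus (b) two nested $w$-generic realizations over an intermediate $E$ — matches the intended content, and the observation that transcendental and algebraic minimal extensions cannot trip the $\mu$-bound (so $D'$ must be prealgebraic) is the right, if brief, justification for the first assertion.
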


\section{Constructing Derivations}

To construct derivations on $M_\mu$, we will proceed by induction, namely extending derivations step-by-step. We will consider only partially defined graded derivations on finite $2$-nilpotent Lie algebras, or equivalently in an ambient $2$-nilpotent Lie algebras $V \oplus \bwsq V \big/N(V)$, we consider partially defined derivations $f : B \rightarrow V$ where  $B$ is a finite vector subspace of $V$ and $f : B \rightarrow V$ a linear map such that $f(N(B)) \subseteq N(B+f(B))$ (see Remark \ref{rem-deriv}).

\begin{definition}

A \emph{derivation extension problem} is the data of two finite vector subspaces $B \leq A \leq V_{\mu}$ and a partially defined graded derivation $f : B \rightarrow  V_\mu $, with  $A+f(B) \leq V_{\mu}$. 

We denote it by $(B \leq A, f)$

\end{definition}

For any derivation extension problem $(B \leq A,f)$, we want to extend the derivation to $A$, and obtain a partially defined graded derivation $f : A \rightarrow V_\mu$, with $A+f(A)\leq V_{\mu}$.

Let us fix such a derivation extension problem $(B \leq A, f)$ for now. 

First we construct what we will call a free pseudosolution. The Lie algebra generated by $A+f(A)$ for this free pseudosolution will not be in general in $\mathcal{K}_\mu$, and finding a solution realized in $M_\mu$ will be the technical heart of the proof.

\begin{construction}
Consider an \emph{abstract} vector space $U$ over $A+f(B)$ such that $\ldim(U/A+f(B)) =\ldim(A/B)$. Extend the linear map $f: B \rightarrow B+f(B)$ to a linear map $f: A \rightarrow U$ which sends a (any) basis of $A$ over $B$ onto a basis of $U$ over $A+f(B)$. From now, we set $U = A+f(A)$.

By Remark \ref{rem-deriv}, this map canonically gives us a partially defined derivation $f: F_2(A) \rightarrow F_2(A+f(A))$.

Now we define $N(A+f(A)) = N(A+f(B))+ f(N(A))$ and we consider the $2$-nilpotent graded algebra 
$\mathfrak{g} = (A+f(A)) \oplus \bwsq \big(A+f(A)\big)/N(A+f(A))$. Then we have to check that $N(A+f(B)) = N(A+f(A)) \cap \bwsq (A+f(B))$ in order to prove that the Lie subalgebra generated by $A+f(B)$ in $M_\mu$ is also a subgraded algebra of $\mathfrak{g}$. We will use a particular (and simpler) case of the following lemma for $A'=B$.
\end{construction}

\begin{lemma}\label{indep-wedges} For any $B \subseteq A' \subseteq A$, we have $f^{-1}\left(\bwsq \big(A + f(A')\big)\right) = \bwsq A'$.

Thus, if the family $(e_1, \ldots, e_t)$ of vectors in $\bwsq A$ is free over $\bwsq A'$, then the family  $(f(e_1), \ldots, f(e_t))$ is free over  $\bwsq \big(A + f(A')\big)$.
\end{lemma}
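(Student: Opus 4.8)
The plan is to work inside the free 2-nilpotent Lie algebra $F_2(A+f(A))$, where everything is governed by linear algebra on exterior squares, and to exploit the explicit description of the derivation $f$ on $F_2(A)$ given by Remark~\ref{rem-deriv}. Recall that $A = B \oplus \langle a_1, \dots, a_m \rangle$ for some basis $(a_1, \dots, a_m)$ of $A$ over $B$, and $f$ sends this basis to a basis of $U = A + f(A)$ over $A + f(B)$; write $a_i' = f(a_i)$, so $A + f(A) = (A + f(B)) \oplus \langle a_1', \dots, a_m' \rangle$. Fix also a basis $(b_j)$ of $A+f(B)$ extending a basis of $A'$. The key point is that $f$ acts on wedges by the Leibniz rule: $f(x \wedge y) = f(x) \wedge y + x \wedge f(y)$, and in particular $f$ is a well-defined linear map $\bwsq A \to \bwsq(A + f(A))$ by Remark~\ref{rem-deriv}.

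**The core computation.**

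First I would prove the equality $f^{-1}\left(\bwsq(A + f(A'))\right) = \bwsq A'$. The inclusion $\supseteq$ is immediate: for $x, y \in A'$ we have $f(x \wedge y) = f(x) \wedge y + x \wedge f(y) \in \bwsq(A + f(A'))$ since $f(x), f(y) \in f(A') \subseteq A + f(A')$ and $x, y \in A' \subseteq A + f(A')$. For $\subseteq$, I take an element $\omega \in \bwsq A$ with $f(\omega) \in \bwsq(A + f(A'))$ and expand $\omega$ in the basis of $\bwsq A$ coming from $(b_j) \cup (a_i)$, separating the terms into three groups: those lying in $\bwsq A'$, those involving exactly one factor among $\{b_j\} \setminus A'$ or $\{a_i\}$, and the "pure" wedges $a_i \wedge a_k$, $a_i \wedge b_j$, etc. Applying $f$ and using that $f$ is the identity on $A + f(B)$ (hence on the $b_j$) and sends $a_i \mapsto a_i'$, one tracks which monomials in the image involve the vectors $a_i'$ — and these are linearly independent over $\bwsq(A + f(A'))$ by the Fact on bases of exterior squares, since $U = (A+f(B)) \oplus \langle a_i' \rangle$ and $A + f(A') = (A+f(B)) \oplus \langle f(a) : a \in A' \cap \langle a_i \rangle \rangle$ but we may arrange $A' \subseteq B$ or handle the general case by noting $f(A') \subseteq A + f(B)$ forces the relevant $a_i'$-coefficients. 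Chasing coefficients, any monomial of $\omega$ not already in $\bwsq A'$ contributes a nonzero term modulo $\bwsq(A+f(A'))$ in $f(\omega)$, so all such coefficients must vanish, giving $\omega \in \bwsq A'$.

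**Deducing the freeness statement and identifying the obstacle.**

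The second assertion follows formally: suppose $\sum_{s=1}^t \lambda_s f(e_s) \in \bwsq(A + f(A'))$ with $\lambda_s \in \mathbb{K}$. Then $f\left(\sum_s \lambda_s e_s\right) \in \bwsq(A + f(A'))$, so by the first part $\sum_s \lambda_s e_s \in \bwsq A'$; since the $e_s$ are free over $\bwsq A'$ by hypothesis, all $\lambda_s = 0$. I expect the main obstacle to be the bookkeeping in the $\subseteq$ direction: one must carefully choose compatible bases of $A'$, $B$, $A$, $f(B)$, and $f(A)$ so that the monomials appearing in $f(\omega)$ sort cleanly into "visibly in $\bwsq(A + f(A'))$" versus "detecting a forbidden monomial of $\omega$", and in particular handle the cross-terms $a_i \wedge b$ whose image $a_i' \wedge b + a_i \wedge f(b)$ straddles both pieces. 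The cleanest route is probably to first reduce to the case $A' = B$ (as the Construction only needs that case), where $f(A') = f(B) \subseteq A + f(B)$ makes the decomposition transparent, and then note the general case is identical with $B$ replaced by $A'$ throughout; I would present the argument directly for general $A'$ but organize the basis choice to make the reduction invisible.
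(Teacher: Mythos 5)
Your overall strategy is the same as the paper's: choose a basis of $A$ over $B$ adapted to $A'$, expand $\omega \in \bwsq A$ in wedge monomials, apply $f$ via the Leibniz rule, discard the terms that visibly lie in $\bwsq(A+f(A'))$, and invoke linear independence of the remaining wedges to force the "forbidden" coefficients of $\omega$ to vanish. The formal deduction of the freeness statement from the set-equality at the end is correct and is exactly the paper's.

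There is, however, a genuine error in the core of your argument: you assert that "$f$ is the identity on $A+f(B)$". This is false. The derivation $f$ is defined on $A$, not on $A+f(B)$, and its restriction to $B$ is the original, arbitrary, partially defined derivation $f|_B : B \to B+f(B)$ --- which is not the identity in general. The fact you actually need, and which the paper uses, is merely that $f(B) \subseteq B+f(B) \subseteq A+f(B)$, so that after applying the Leibniz rule any monomial $b_j \wedge b_k$ with both factors in $B$ is sent into $\bwsq(A+f(B)) \subseteq \bwsq(A+f(A'))$, and the cross-term $a_i \wedge f(b_j)$ coming from $f(a_i \wedge b_j)$ likewise lands there. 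Relatedly, your basis $(b_j)$ is declared to be a basis of $A+f(B)$ extending a basis of $A'$, but you then expand $\omega \in \bwsq A$ in $(b_j) \cup (a_i)$; this is redundant and will not give a clean basis of $\bwsq A$. The paper instead takes the $b_j$ inside $B$ and $(a_1,\dots,a_n)$ a basis of $A$ over $B$ whose first $s$ members together with $B$ span $A'$. With "$f$ fixes $A+f(B)$" replaced by "$f$ maps $B$ into $A+f(B)$" and the basis straightened out, your coefficient chase reduces to the paper's: the monomials $f(a_i) \wedge b_j$ (for $i>s$) and $a_k \wedge f(a_\ell)$ (for $\ell>s$) are free over $\bwsq(A+f(A'))$, forcing the corresponding coefficients of $\omega$ to vanish and hence $\omega \in \bwsq A'$.
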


\begin{proof}
Consider a basis $(a_1, \ldots, a_n)$ of $A$ over $B$ such that $A' = \sev{ B, a_1, \cdots , a_s}$. 

Let $e \in \bwsq A$, we can write: 

\begin{align*}
e = \sum\limits_{i,j} \lambda_{i,j}(a_i \wedge b_j) + \sum\limits_{k < \ell} \beta_{k,\ell} (a_k \wedge a_\ell) + c 
\end{align*}
with $\lambda_{i,j}$, $\beta_{k,\ell} \in \mathbb{F}_q$, $c \in \bwsq B$ and $b_1,\ldots,b_m$ linearly independent vectors of $B$. 
Then
\begin{align*}
f(e) & = \sum\limits_{i,j} \lambda_{i,j}(f(a_i) \wedge b_j + a_i \wedge f(b_j)) + \sum\limits_{k < \ell} \beta_{k,\ell} (f(a_k) \wedge a_\ell  + a_k \wedge f(a_\ell)) + f(c) \\
& = \sum\limits_{i,j} \lambda_{i,j}(f(a_i) \wedge b_j)   -  \sum\limits_{\ell <k} \beta_{\ell,k} (a_k \wedge f(a_\ell)) + \sum\limits_{k<\ell} \beta_{k,\ell} (a_k \wedge  f(a_\ell)) \\
& + \sum\limits_{i,j} \lambda_{i,j}(a_i \wedge f(b_j)) + f(c).
\end{align*}

The last two terms of that sum belong to $\bwsq \big(A + f(B)\big)$. Moreover, the family $\{ f(a_i) \wedge b_j \colon  s < i \leq n, 1 \leq j \leq m\} \cup \{a_k \wedge f(a_\ell) \colon 1 \leq k \leq n ,  s< \ell \leq n \}$ is linearly independent over $\bwsq \big(A + f(A')\big) = \bwsq \big(A + f(B) \oplus \la f(a_1) , \cdots f(a_s) \ra\big)$. Thus if $f(e) \in \bwsq\big(A + f(A')\big)$, we must have $\lambda_{i,j} = 0$ for all $i> s$ and $\beta_{k,\ell} = 0$ for all $\ell > s$, implying $e  \in \bigwedge^2 A'$. 

This yields $f^{-1}\left(\bwsq\big(A + f(A')\big)\right) \subseteq \bwsq A'$, and the other inclusion is immediate.

\end{proof}

\begin{cor} $N(A+f(B)) = N(A+f(A)) \cap \bwsq (A+f(B))$. 

Thus, the Lie subalgebra generated by $A+f(B)$ in $M_\mu$ is also a subgraded algebra of $\mathfrak{g}= (A+f(A)) \oplus \bwsq \big(A+f(A)\big)/N\big(A+f(A)\big)$ and, the linear map $f: A \rightarrow A+f(A)$ induces a partially defined graded derivation $f: A \oplus \bwsq A \big/ N(A) \to \mathfrak{g}$.

\end{cor}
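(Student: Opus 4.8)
The statement splits into three parts: the displayed equality of ideals, the resulting ``subgraded'' embedding, and the induced graded derivation on the quotient algebra. The plan is to deduce the first part from Lemma~\ref{indep-wedges} applied with $A' = B$, and to obtain the other two as essentially formal consequences of Remarks~\ref{2-nilpotent-as-quotient} and~\ref{rem-deriv}.

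\emph{The equality.} One inclusion is immediate, since $N(A+f(B))$ is contained both in $N(A+f(A))$ (by the defining formula $N(A+f(A)) = N(A+f(B)) + f(N(A))$) and in $\bwsq(A+f(B))$. For the converse, I would take $z \in N(A+f(A)) \cap \bwsq(A+f(B))$ and write $z = w + f(v)$ for some $w \in N(A+f(B))$ and $v \in N(A)$. Then $f(v) = z - w \in \bwsq(A+f(B))$, so the case $A' = B$ of Lemma~\ref{indep-wedges}, which gives $f^{-1}\big(\bwsq(A+f(B))\big) = \bwsq B$, forces $v \in \bwsq B$; hence $v \in N(A) \cap \bwsq B = N(B)$, as all the ideals in sight are intersections of $N(V_\mu)$ with the relevant exterior square. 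Since $f \colon B \to V_\mu$ is a partially defined derivation, $f(N(B)) \subseteq N(B+f(B)) \subseteq N(A+f(B))$, so $f(v) \in N(A+f(B))$ and therefore $z \in N(A+f(B))$.

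\emph{The embedding and the derivation.} By Remark~\ref{2-nilpotent-as-quotient}, the subalgebra of $M_\mu$ generated by $A+f(B)$ is $(A+f(B)) \oplus \bwsq(A+f(B))\big/N(A+f(B))$; the equality just proven says precisely that the canonical map of its degree-two part into $\bwsq(A+f(A))\big/N(A+f(A))$ is injective, and together with the inclusion $A+f(B) \subseteq A+f(A) = \mathfrak{g}_1$ this identifies that subalgebra with a subgraded algebra of $\mathfrak{g}$. For the last assertion I would first note, by the same argument with $A' = B$, that $N(A+f(A)) \cap \bwsq A = N(A)$, so that the subalgebra generated by $A$ inside $\mathfrak{g}$ is exactly $A \oplus \bwsq A\big/N(A)$, i.e. the subalgebra generated by $A$ in $M_\mu$. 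Since $f(N(A)) \subseteq N(A+f(A))$ by construction and $f(A) \subseteq A+f(A) = \mathfrak{g}_1$, Remark~\ref{rem-deriv} then shows that the linear map $f \colon A \to A+f(A)$ induces a partially defined graded derivation $A \oplus \bwsq A\big/N(A) \to \mathfrak{g}$.

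\emph{Main obstacle.} There is little difficulty here beyond Lemma~\ref{indep-wedges} itself: the only points requiring care are the systematic identification of each $N(\,\cdot\,)$ with the appropriate intersection $N(V_\mu) \cap \bwsq(\,\cdot\,)$ and the (harmless) non-uniqueness of the decomposition $z = w + f(v)$. Once the lemma is available with $A' = B$, the corollary is routine.
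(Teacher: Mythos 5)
Your proof matches the paper's argument essentially line for line: the converse inclusion is obtained by decomposing $z \in N(A+f(A)) \cap \bwsq(A+f(B))$ as $w + f(v)$, pushing $f(v)$ into $\bwsq(A+f(B))$, and invoking Lemma~\ref{indep-wedges} with $A'=B$ to conclude $v \in N(B)$. The one place you are slightly more careful than the paper is the explicit check that $N(A+f(A)) \cap \bwsq A = N(A)$ before applying Remark~\ref{rem-deriv}; the paper leaves this implicit, but it is indeed the step needed to identify the subalgebra generated by $A$ inside $\mathfrak{g}$ with $A \oplus \bwsq A / N(A)$.
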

\begin{proof}
Recall that $N(A+f(A)) = N(A+f(B))+f(N(A))$. Let  $e=e_1+f(e_2)$ with $e \in \bwsq \big(A+f(B)\big)$, $e_1 \in N(A+f(B))$ and $e_2 \in N(A)$. Then $f(e_2) \in \bwsq \big(A+f(B)\big)$, and by the previous lemma $e_2 \in \bwsq B$. Since $N(B) = B \cap N(A)$ and $f:B \rightarrow B+f(B)$ is a derivation, we obtain that $f(e_2) \in N(B+f(B))$ and $e \in  N(A+f(B))$.

By definition of $N(A+f(A))$, we have $f(N(A)) \subseteq N(A+f(A))$, then, by Remark \ref{rem-deriv}, the linear map $f: A \rightarrow A+f(A)$ induces a partially graded defined derivation $f: A \oplus \bwsq A \big/ N(A) \to \mathfrak{g}$, which extends the  partially graded defined derivation $f: B \oplus \bwsq B \big/ N(B) \to (B+f(B)) \oplus \bwsq \big(B+f(B)\big) \big/ N\big(B+f(B)\big)$.
\end{proof}

We call the above extension the \emph{free pseudosolution} of the derivation extension problem (it is unique up to isomorphisms). When there is no ambiguity, we will say $A+f(A)$ the free pseudosolution of $(B \leq A,f)$.

The following lemma will be useful in understanding this derivation:

\begin{lemma}\label{dimension-equality} 
Let $A+f(A)$ be the free pseudosolution of a derivation extension problem $(B \leq A, f)$ and $A'$ a vector subspace of $A$ containing $B$.

Then $\delta(A+f(A')/A+f(B))= \delta(A'/B)$.

More precisely, the family $(f(e_1),\ldots,f(e_t))$ is a basis of $N(A+f(A'))$ over $N(A+f(B))$ for any basis $(e_1,\ldots,e_t)$ of $N(A')$ over $N(B)$.
\end{lemma}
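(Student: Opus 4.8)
The plan is to prove the more precise statement first — that for any basis $(e_1,\ldots,e_t)$ of $N(A')$ over $N(B)$ the family $(f(e_1),\ldots,f(e_t))$ is a basis of $N(A+f(A'))$ over $N(A+f(B))$ — and then read off the predimension equality by a dimension count. Throughout, every $N(\cdot)$ is computed inside $\mathfrak{g}$, i.e. $N(W)=N(A+f(A))\cap\bwsq W$ for $W\subseteq A+f(A)$, and the whole argument rests on the defining relation $N(A+f(A))=N(A+f(B))+f(N(A))$ of the free pseudosolution together with Lemma \ref{indep-wedges}.

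The first step is the intermediate identity $N(A+f(A'))=N(A+f(B))+f(N(A'))$. The inclusion $\supseteq$ is easy: $N(A+f(B))\subseteq N(A+f(A'))$ by monotonicity, while $f(N(A'))$ is contained both in $N(A+f(A))$ (since $N(A')\subseteq N(A)$ and $f(N(A))\subseteq N(A+f(A))$ by construction) and in $\bwsq(A'+f(A'))\subseteq\bwsq(A+f(A'))$, hence in $N(A+f(A'))$. For $\subseteq$, take $e\in N(A+f(A'))\subseteq N(A+f(A))$ and write $e=e_1+f(e_2)$ with $e_1\in N(A+f(B))$ and $e_2\in N(A)$; then $f(e_2)=e-e_1\in\bwsq(A+f(A'))$, so Lemma \ref{indep-wedges} forces $e_2\in f^{-1}\!\left(\bwsq(A+f(A'))\right)=\bwsq A'$, and therefore $e_2\in N(A)\cap\bwsq A'=N(A')$.

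Next, fix a basis $(e_1,\ldots,e_t)$ of $N(A')$ over $N(B)$. Since $N(B)=N(A')\cap\bwsq B$, this family is automatically linearly independent over $\bwsq B$, so the second part of Lemma \ref{indep-wedges} (applied with $B$ in the role of $A'$) shows that $(f(e_1),\ldots,f(e_t))$ is free over $\bwsq(A+f(B))$, hence a fortiori over $N(A+f(B))$. Conversely, from $N(A')=N(B)+\sev{e_1,\ldots,e_t}$ and $f(N(B))\subseteq N(B+f(B))\subseteq N(A+f(B))$ (because $f$ restricted to $B$ is a derivation), the intermediate identity yields $N(A+f(A'))=N(A+f(B))+\sev{f(e_1),\ldots,f(e_t)}$. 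Thus $(f(e_1),\ldots,f(e_t))$ is a basis of $N(A+f(A'))$ over $N(A+f(B))$.

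Finally, for the predimension: the previous paragraph gives $\ldim\big(N(A+f(A'))/N(A+f(B))\big)=t=\ldim(N(A')/N(B))$, and since $f$ maps a basis of $A$ over $B$ to a basis of $A+f(A)$ over $A+f(B)$, extending a basis of $A'$ over $B$ to one of $A$ over $B$ shows $\ldim\big((A+f(A'))/(A+f(B))\big)=\ldim(A'/B)$; subtracting gives $\delta(A+f(A')/A+f(B))=\delta(A'/B)$. I do not expect a genuine obstacle here: the only delicate points are keeping track of which ambient space each $N(\cdot)$ refers to, and the small observation that a basis of $N(A')$ over $N(B)$ is automatically independent over $\bwsq B$ — which is exactly what makes Lemma \ref{indep-wedges} applicable.
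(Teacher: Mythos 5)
Your proposal is correct and follows essentially the same route as the paper: first establish $N(A+f(A'))=N(A+f(B))+f(N(A'))$ by decomposing an arbitrary element of $N(A+f(A'))\subseteq N(A+f(A))=N(A+f(B))+f(N(A))$ and invoking Lemma \ref{indep-wedges} to push the $f(N(A))$-part into $f(N(A'))$, then apply Lemma \ref{indep-wedges} again to show that a basis of $N(A')$ over $N(B)$ maps to a free family over $\bwsq(A+f(B))$, and finish with a dimension count. The only difference is that you spell out the $\supseteq$ inclusion of the intermediate identity, which the paper leaves implicit.
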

\begin{proof}

By construction, we know that $N(A + f(A')) \subset N(A + f(B))  +f(N(A))$. As in the previous proof, let $\eta = \eta_1 + f(\eta_2)$ be in $N(A + f(A'))$, decomposed along this direct sum, i.e $\eta_1 \in N(A+f(B))$, $\eta_2 \in N(A)$ and $f(\eta_2) \in N(A+f(A'))$. By Lemma \ref{indep-wedges} we obtain $\eta_2 \in N(A) \bigcap \bwsq A'=N(A')$. Thus $N(A+f(A')) = N(A+f(B)) + f(N(A'))$.

Consider a basis $(e_1,\ldots, e_t)$ of $N(A')$ over $N(B)$. Since $N(B) = N(A') \cap \bwsq B$, the family $(e_1,\ldots, e_t)$ is free over $\bwsq B$, and, by Lemma \ref{indep-wedges}, the family $(f(e_1),\ldots, f(e_t))$ is free over $\bwsq \big(A+f(B)\big)$. Moreover, $f(N(A'))$ is generated by  $(f(e_1),\ldots, f(e_t))$  over $f(N(B)) \subseteq N(B+f(B))$ (since $f: B \rightarrow B+f(B)$ is a derivation). Thus $(f(e_1),\ldots, f(e_t))$ is a basis of  $N(A+f(B)) + f(N(A'))=N(A+f(A'))$ over $N(A+f(B))$, and $\ldim(N(A+f(A'))/N(A+f(B))= \ldim(N(A')/N(B))$.

Finally, by construction $\ldim(A+f(A')/A+f(B))=\ldim (A'/B)$, so $\delta(A+f(A'))= \delta(A'/B)$.

\end{proof}

We will consider \emph{minimal} derivation extension problems $(B \leq A, f)$, i.e. derivation extension problems such that $A$ is a minimal strong extension of $B$.

\begin{cor}\label{minimal-free-pseudosolution}

The free pseudosolution $A+f(A)$ of a (minimal) derivation extension problem $(B \leq A, f)$ is a (minimal) strong extension of $A+f(B)$.

\end{cor}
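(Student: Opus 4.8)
The plan is to transport the self-sufficiency statements back to $B \leq A$ via Lemma~\ref{dimension-equality}, after setting up a dictionary between the relevant intermediate subspaces. First I would fix a complement $W$ of $B$ in $A$, so that $A = B \oplus W$; by construction of the free pseudosolution, $f$ maps a basis of $W$ to a basis of $U = A + f(A)$ over $A + f(B)$, so $f$ is injective on $W$ and $A + f(A) = (A + f(B)) \oplus f(W)$. The key claim is that $A' \mapsto A + f(A')$ is an injective, inclusion-preserving map from the set of subspaces $A'$ with $B \subseteq A' \subseteq A$ onto the set of subspaces $C$ with $A + f(B) \subseteq C \subseteq A + f(A)$. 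For surjectivity: any such $C$ decomposes as $C = (A + f(B)) \oplus (C \cap f(W))$, and taking $W'$ to be the preimage of $C \cap f(W)$ under $f|_W$ gives $C = A + f(B \oplus W')$. Injectivity follows since $A' = B \oplus (A' \cap W)$ and $A' \cap W$ is recovered from $A + f(A')$ by taking the preimage under $f|_W$ of its $f(W)$-component.

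Granting this, strongness is immediate: given $A + f(B) \subseteq C \subseteq A + f(A)$, write $C = A + f(A')$ with $B \subseteq A' \subseteq A$; then $\delta(C / A+f(B)) = \delta(A'/B) \geq 0$ by Lemma~\ref{dimension-equality} and $B \leq A$. Hence $A + f(B) \leq A + f(A)$.

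For minimality, assume the problem is minimal, i.e. $A$ is a minimal strong extension of $B$, and suppose towards a contradiction that $A + f(B) \subsetneq D \subsetneq A + f(A)$ with $D \leq A + f(A)$. By the dictionary $D = A + f(A')$ with $B \subsetneq A' \subsetneq A$. Then for every $C'$ with $A' \subseteq C' \subseteq A$ we have $A + f(A') \subseteq A + f(C') \subseteq A + f(A)$, hence $\delta(A+f(C') / A + f(A')) \geq 0$. On the other hand, Lemma~\ref{dimension-equality} gives $\delta(A + f(C') / A + f(B)) = \delta(C'/B)$ and $\delta(A + f(A')/A+f(B)) = \delta(A'/B)$, so by additivity of the relative predimension (which holds by inspection, $\delta(X/Z) = \delta(X/Y) + \delta(Y/Z)$ for $Z \subseteq Y \subseteq X$) we get $\delta(A+f(C') / A + f(A')) = \delta(C'/B) - \delta(A'/B) = \delta(C'/A')$. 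Thus $\delta(C'/A') \geq 0$ for all such $C'$, meaning $A' \leq A$, which contradicts the minimality of $A$ over $B$. So no such $D$ exists, and the free pseudosolution is a minimal strong extension of $A+f(B)$.

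The only non-bookkeeping point, and the step I expect to require the most care, is establishing the dictionary: that every subspace of the free pseudosolution lying between $A + f(B)$ and $A + f(A)$ is exactly of the form $A + f(A')$ for a unique $B \subseteq A' \subseteq A$. Everything else is a routine application of Lemma~\ref{dimension-equality} and the additivity of $\delta$.
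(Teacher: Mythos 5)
Your proof is correct and follows essentially the same route as the paper: both rest on the correspondence $A' \mapsto A + f(A')$ between subspaces $B \subseteq A' \subseteq A$ and subspaces $A+f(B) \subseteq C \subseteq A+f(A)$ (the paper phrases this compactly as $A' = f^{-1}(C)$), together with Lemma~\ref{dimension-equality}. The only variation is in the minimality step: the paper splits into the cases $\ldim(A/B)=1$ (automatic) and prealgebraic (where it shows $\delta(C/A+f(B)) = \delta(A'/B) > 0$ directly), while you transport a hypothetical strong intermediate $D = A+f(A')$ back to conclude $A' \leq A$ via additivity of $\delta$, handling both cases uniformly --- a marginally cleaner bookkeeping, but the same underlying mechanism.
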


\begin{proof}
Consider a vector subspace $C$, such that  $A + f(B) \subseteq C \subseteq A + f(A)$. By linearity, $C = A+f(A')$ where $A'=f^{-1}(C)$. Note that $B \subseteq A' \subseteq A$. By the previous lemma, $\delta(C/A+f(B)) = \delta(A'/B) \geq 0$ since $B \leq A$. One deduces that $A+f(A)$ is a strong extension of $A+f(B)$. 

Suppose now that $A$ is a minimal strong extension of $B$. If $\ldim(A/B) =1$, then $\ldim(A+f(A)/B+f(A))=1$ and $A+f(A)$ is minimal over $A+f(B)$. Otherwise, $A$ is prealgebraic over $B$ and, in particular, $\delta(A/B)=0$. In this case, if  $A + f(B) \subsetneq C \subsetneq A + f(A)$, we have $\delta(C/A+f(B))= \delta(A'/B)>0 = \delta(A/B)=\delta(A+f(A)/A+f(B))$, therefore $C$ is not strong in $A+f(A)$.
\end{proof}

Our goal is to use the Lie algebra just constructed to extend a derivation from $B \leq V_{\mu}$ to $B \leq A \leq V_{\mu}$. It is not guaranteed that the free pseudosolution will belong to $\mathcal{K}_{\mu}$. There are multiple cases to consider:

\begin{enumerate}[(A)]
\item The free pseudosolution does not belong to $\mathcal{K}$. 
\item The free pseudosolution belongs to $\mathcal{K}$, but not to $\mathcal{K}_{\mu}$.
\item The free pseudosolution belongs to $\mathcal{K}_{\mu}$. In that case, we will be able to extend the derivation without further work, by using richness of $M_\mu$. 
\end{enumerate}

Let us start dealing with case (A). We first notice:

\begin{lemma}
Let $A+f(A)$ be the free pseudosolution of a derivation extension problem $(B \leq A, f)$.
For any non-zero vector subspace $C$ of $A + f(A)$, we have $\delta(C) \geq 1$.

\end{lemma}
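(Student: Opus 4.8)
The plan is to reduce the statement to the corresponding property of $V_\mu$ via the structural information we already have about the free pseudosolution. Recall that $M_\mu = V_\mu \oplus \bwsq V_\mu / N(V_\mu)$ belongs to $\mathcal{K}^\mu \subseteq \mathcal{K}$, so in particular $\delta(D) \geq 1$ for every non-zero finite vector subspace $D$ of $V_\mu$ (this is the second axiom of $\mathcal{K}$, as noted right after the definition of $\mathcal{K}$). So the inequality holds for non-zero subspaces of $A + f(B)$, since $A + f(B) \leq V_\mu$ is part of the data of a derivation extension problem. The task is to upgrade this to all non-zero subspaces of the abstract space $A + f(A)$.

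First I would fix a non-zero vector subspace $C \subseteq A + f(A)$ and consider $C' := C \cap (A + f(B))$ together with the subalgebra generated by $C$. By submodularity of the predimension, $\delta(C) = \delta(C' + (C\cap(A+f(A)))) \ldots$ — more precisely I want to compare $\delta(C)$ with $\delta(C')$ using $\delta(C/C') \geq \delta\big((C + (A+f(B)))/(A+f(B))\big)$. The key computational input is Lemma \ref{dimension-equality}: writing $C + (A+f(B)) = A + f(A')$ for the subspace $A' = f^{-1}(C + (A+f(B)))$ with $B \subseteq A' \subseteq A$, we get $\delta\big((A+f(A'))/(A+f(B))\big) = \delta(A'/B) \geq 0$, the last inequality because $B \leq A$. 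Combining, $\delta(C) \geq \delta(C')$.

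Now there are two cases. If $C' \neq \{0\}$, then since $C' \subseteq A + f(B) \leq V_\mu$ we have $\delta(C') \geq 1$ and we are done. If $C' = \{0\}$, then $C$ meets $A + f(B)$ trivially, so $C$ injects into the quotient $(A+f(A))/(A+f(B))$, which is free (has trivial relations coming from $f(N(A))$ only through $A'$ as above). In this case I would argue directly: pick any non-zero $v \in C$; writing $v$ in terms of a basis of $A+f(A)$ over $A+f(B)$, one sees $\mathbb{F}_q \cdot v$ generates a subalgebra whose $N$ is trivial — because the defining relations $N(A+f(A)) = N(A+f(B)) + f(N(A))$ all involve vectors of $A + f(B)$ nontrivially, by Lemma \ref{indep-wedges} — hence $\delta(\mathbb{F}_q\cdot v) = 1$, and by strongness $\delta(C) \geq \delta(\mathbb{F}_q \cdot v) \geq 1$ once we know $\mathbb{F}_q \cdot v \leq C$; alternatively, more cleanly, I would note $\delta(C/C') = \delta(C) \geq \delta\big((C + (A+f(B)))/(A+f(B))\big) = \delta(A'/B)$ and separately that $A'\supsetneq B$ forces $\delta(A'/B) \geq 1$ whenever $A'$ is a transcendental or algebraic-free step — but since $A'$ could be prealgebraic with $\delta(A'/B) = 0$, I instead fall back on: $C \hookrightarrow (A+f(A))/(A+f(B))$ and the latter carries the structure of a free $2$-nilpotent algebra over $A+f(B)$, where every non-zero subspace has predimension $\geq 1$ trivially (linear dimension at least $1$, and $N$ trivial relative to the base).

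The main obstacle I anticipate is the case $C' = \{0\}$ with $A'$ prealgebraic over $B$: there $\delta(A'/B) = 0$, so the crude submodularity bound only gives $\delta(C) \geq 0$, and one genuinely needs to use that $C$ lives "in the new directions" where the relation space $N$ contributes nothing — i.e. one must invoke Lemma \ref{indep-wedges} to see that no element of $\bwsq C$ can lie in $N(A+f(A))$ unless it already lies in $\bwsq(A+f(B))$, hence $N(C) = N(A+f(A)) \cap \bwsq C = \{0\}$, giving $\delta(C) = \ldim(C) \geq 1$. Once that observation is in place, the proof is a short case split combining submodularity with Lemma \ref{dimension-equality} and the fact that $A + f(B) \leq V_\mu \in \mathcal{K}$.
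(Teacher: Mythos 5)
Your overall architecture matches the paper's: split on whether $C' := C \cap (A+f(B))$ is zero, handle $C' \neq \{0\}$ by submodularity together with $A+f(B) \leq A+f(A)$ and $A+f(B) \in \mathcal{K}$, and for $C' = \{0\}$ show directly that $N(C) = \{0\}$. The first case is fine (the paper invokes $A + f(B) \leq A + f(A)$ from Corollary \ref{minimal-free-pseudosolution}, you route it through Lemma \ref{dimension-equality}; same content). You also correctly identify the crux of the second case: one needs $N(C) = N(A+f(A)) \cap \bwsq C = \{0\}$.

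However, the tool you reach for in that second case — Lemma \ref{indep-wedges} — does not give what you want, and I believe the argument as proposed would stall there. Lemma \ref{indep-wedges} is a statement about \emph{preimages under the derivation} $f \colon \bwsq A \to \bwsq(A+f(A))$; concretely, it controls where elements of $f(\bwsq A)$ can land, i.e.\ expressions $\sum \big(f(a) \wedge a' + a \wedge f(a')\big)$, which are ``linear in $f$'' over $\bwsq(A+f(B))$. If you try to use it to show $\bwsq C \cap N(A+f(A)) = \{0\}$, you are led to write an element of the intersection as $w_1 + f(e)$ with $w_1 \in N(A+f(B))$, $e \in N(A)$, and then you would need $f(e) \in \bwsq(A+f(B))$ \emph{before} applying the lemma — which is equivalent to the conclusion and hence circular. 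What actually makes $N(C)$ vanish is a different phenomenon: once you pick a basis $(f(a_1)+a'_1, \ldots, f(a_s)+a'_s)$ of $C$ with the $f(a_i)$ free over $A+f(B)$, the basis vectors $(f(a_i)+a'_i) \wedge (f(a_j)+a'_j)$ of $\bwsq C$ each contain a ``pure new $f\wedge f$'' term $f(a_i) \wedge f(a_j)$, and these project linearly independently onto the complementary block. By contrast $N(A+f(A)) = N(A+f(B)) + f(N(A))$ lies entirely in $\big\la \alpha\wedge\alpha',\, f(\beta)\wedge f(\beta'),\, \alpha\wedge f(\alpha') \colon \alpha,\alpha'\in A,\ \beta,\beta'\in B \big\ra$, which has trivial projection onto that block. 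The paper's proof is exactly this direct freeness computation, with no appeal to Lemma \ref{indep-wedges}. Your intuition (``$C$ lives in the new directions where $N$ contributes nothing'') is right; the cited lemma just isn't the instrument that delivers it.
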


\begin{proof}
Let $C \subseteq A + f(A)$ be a non-zero vector subspace. 

If $C \cap (A+f(B))$ is non-zero then, by submodularity, 
\[\delta(C) \geq \delta(C\cap (A+f(B))) + \delta(C+A+f(B)/A+f(B)) \geq 1+0 \]
since $A+f(B) \in \mathcal{K}$ and $A+f(B) \leq A+f(A)$.

Otherwise, consider a basis $\big(f(a_1)+a'_1,\ldots, f(a_s)+a'_s\big)$ of $C$. Since we have $C \cap (A+f(B)) = \{0\}$, the family $\big(f(a_1),\ldots,f(a_s)\big)$ is free over $A+f(B)$, and 
the basis $\big((f(a_i)+a'_i)\wedge (f(a_j)+a'_j) \colon i<j)\big)$ of $\bwsq C\subseteq \bwsq \big(A+f(A)\big)$ is free over $\big\la\alpha \wedge \alpha', f(\beta) \wedge f(\beta'), \alpha \wedge f(\alpha') \colon \alpha,\alpha' \in A,\, \beta,\beta' \in B \big\ra$.

By construction \[N(A+f(A)) \subseteq \big\la\alpha \wedge \alpha', f(\beta) \wedge f(\beta'), \alpha \wedge f(\alpha') \colon \alpha,\alpha' \in A,\, \beta,\beta' \in B \big\ra\]
and thus $N(C) = N(A+f(A)) \bigcap \bwsq C = \{0\}$. Then $\delta(C) = s >0$.

\end{proof}

Therefore, the only way for the free pseudosolution of $(B \leq A,f)$ to not belong to $\mathcal{K}$ is for $F_2(A+f(A))$ to contain linearly independent vectors $v_0,v_1$ such that $v_0 \wedge v_1 \in N(A + f(A))$, or equivalently $[v_0,v_1] = 0$ in $A+f(A) \oplus \bigwedge^2 (A+f(A))/N(A+f(A))$. This possibility cannot be eliminated in general, as the following shows:

\begin{example}

Suppose that we have three vectors $b_0,b_1,b_2$ such that $f(b_1) = f(b_2) = 0$ and $f(b_0) = b_0$, and want to extend $f$ to an element $a$ satisfying $[a, b_0] + [b_1 , b_2] = 0$.

Let $A + f(A)$ be the free pseudosolution of this derivation extension problem. It has to satisfy $[f(a) , b_0] + [a , b_0] = 0$, which can be factorized as $[f(a) + a , b_0] = 0$. Therefore the free pseudosolution cannot belong to $\mathcal{K}$.

An obvious workaround, in that case, is to consider the linear map $g$ with $g(b_i) = f(b_i)$ for $i = 0,1,2$, and $g(a) = -a$, and extend it into a derivation of $\la a,b_0,b_1,b_2 \ra$. This is easily checked to quotient into a derivation extending $f$.

\end{example}

The solution presented in the previous example is the idea behind the general case:

\begin{lemma}[Case (A)]\label{free-pseudosolution-not-inK}
Suppose that the free pseudosolution of a minimal derivation extension problem $(B \leq A, f)$ is not in $\mathcal{K}$ Then the linear map $f:B \rightarrow B+f(B)$ can be extended to a linear map $g : A \rightarrow A + f(B)$ such that $g(N(A)) \subseteq N(A+f(B))$, which gives a solution of the extension problem.

\end{lemma}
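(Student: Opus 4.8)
The plan is to exploit the failure of the free pseudosolution to lie in $\mathcal K$ to extract a concrete degenerate $2$-plane, then to use strongness and minimality of the extension $A+f(B)\le A+f(A)$ to show this forces $B\le A$ to be an \emph{algebraic} extension, and finally to read off $g$ from that plane. First, fix a basis $a_1,\dots,a_n$ of $A$ over $B$ and set $W=A+f(B)$, $F=\sev{f(a_1),\dots,f(a_n)}$, so that $A+f(A)=W\oplus F$. Since $\delta(C)\ge 1$ for every nonzero subspace $C$ of $A+f(A)$, the only way for the free pseudosolution not to lie in $\mathcal K$ is that there are linearly independent $v_0,v_1\in A+f(A)$ with $v_0\wedge v_1\in N(A+f(A))$. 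As $N(A+f(A))$ has trivial component on $\bwsq F$ (from the inclusion $N(A+f(A))\subseteq\sev{\alpha\wedge\alpha',\,f(\beta)\wedge f(\beta'),\,\alpha\wedge f(\alpha'):\alpha,\alpha'\in A,\ \beta,\beta'\in B}$ obtained above), the $F$-components of $v_0,v_1$ are linearly dependent, so after changing basis in $\sev{v_0,v_1}$ I may assume $v_0\in W$ and $v_1=w+u$ with $w\in W$, $u\in F$. If $u=0$ then $v_0\wedge v_1\in N(A+f(A))\cap\bwsq W=N(A+f(B))$ (by the corollary following Lemma~\ref{indep-wedges}), impossible since $v_0,v_1$ are independent and $A+f(B)\in\mathcal K$. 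Hence $u\ne 0$ and $v_0\ne 0$.

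Next I would reduce to the algebraic case. Put $W'=W+\sev u$, so $\ldim(W'/W)=1$. Since $v_0\wedge v_1\in\bwsq W'\cap N(A+f(A))=N(W')$ while its $W\wedge F$-component $v_0\wedge u$ is nonzero, we have $v_0\wedge v_1\notin\bwsq W\supseteq N(W)$, so $\ldim(N(W')/N(W))\ge 1$ and $\delta(W'/W)\le 0$. But $W\le A+f(A)$ by Corollary~\ref{minimal-free-pseudosolution}, so $\delta(W'/W)\ge 0$, hence $\delta(W'/W)=0$; this forces $W'\le A+f(A)$ (because $\delta(C/W')=\delta(C/W)\ge 0$ for $W'\subseteq C\subseteq A+f(A)$). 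As $A+f(A)$ is a \emph{minimal} strong extension of $W$ (again Corollary~\ref{minimal-free-pseudosolution}) and $W\subsetneq W'$, we get $W'=A+f(A)$, i.e. $\ldim(A/B)=\ldim(F)=1$. Moreover $N(A)\ne N(B)$, for otherwise $f(N(A))\subseteq N(W)$ and $N(A+f(A))=N(W)$, contradicting $v_0\wedge v_1\notin N(W)$. Therefore $B\le A$ is algebraic: $A=B\oplus\sev{a_1}$ and $N(A)=N(B)\oplus\sev{a_1\wedge b+c}$ for some $b\in B$, $c\in\bwsq B$.

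Finally I construct $g$. By Lemma~\ref{dimension-equality} (with $A'=A$), $N(A+f(A))=N(W)\oplus\sev\xi$ with $\xi=f(a_1\wedge b+c)=f(a_1)\wedge b+\omega$, where $\omega:=a_1\wedge f(b)+f(c)\in\bwsq W$. Since $F=\sev{f(a_1)}$ and $u\ne 0$, rescaling $v_1$ I may assume $u=f(a_1)$; write $v_0\wedge v_1=\gamma\xi+\eta$ with $\gamma\in\mathbb{F}_q$ and $\eta\in N(W)$. Comparing $W\wedge F$-components gives $v_0\wedge f(a_1)=\gamma\,f(a_1)\wedge b$, hence $v_0=-\gamma b$ (so $\gamma\ne 0$ and $b\ne 0$); comparing $\bwsq W$-components gives $-\gamma\,b\wedge w=\gamma\omega+\eta$, hence $\omega\equiv w\wedge b\pmod{N(W)}$. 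Define $g\colon A\to W=A+f(B)$ by $g_{\mid B}=f_{\mid B}$ and $g(a_1)=-w$. Then $g(N(B))=f(N(B))\subseteq N(B+f(B))\subseteq N(W)$ and
\[g(a_1\wedge b+c)=-w\wedge b+a_1\wedge f(b)+f(c)=-w\wedge b+\omega\equiv 0\pmod{N(W)},\]
so $g(N(A))\subseteq N(A+f(B))$. By Remark~\ref{rem-deriv}, $g$ induces a partially defined graded derivation $A\oplus\bwsq A/N(A)\to W\oplus\bwsq W/N(W)$ extending $f$, and since $A+g(A)=A+f(B)\le V_\mu$ this is a solution of the extension problem.

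The step I expect to be the main obstacle is the reduction to the algebraic case: one must see that, in the minimal setting, the degenerate plane together with the strongness and minimality of $A+f(B)\le A+f(A)$ forces $\ldim(A/B)=1$ — equivalently, that the free pseudosolution of a minimal problem which is prealgebraic or transcendental already lies in $\mathcal K$. Once one is in the algebraic case, Steps~1 and~3 amount to routine bookkeeping in the splitting $A+f(A)=W\oplus F$.
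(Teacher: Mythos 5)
Your proof is correct, and it takes a genuinely different route to the key step than the paper does. Both arguments first observe that the only way for the free pseudosolution to fail to be in $\mathcal K$ is to contain linearly independent $v_0,v_1$ with $v_0\wedge v_1\in N(A+f(A))$, and both end up defining $g$ on a single generator of $A$ over $B$ by the negative of a $W$-component; but the reduction to the algebraic case ($\ldim(A/B)=1$) is obtained quite differently. The paper works directly with the explicit expression $v_0\wedge v_1=f(e)+c$ with $e\in N(A)$, $c\in N(A+f(B))$: after writing $e$ in a basis and matching coefficients in the graded decomposition of $\bwsq(A+f(A))$, it shows term by term that all $a_i\wedge a_j$ coefficients and all $a_i\wedge b_i$ coefficients with $i>0$ vanish, so $e=a_0\wedge b_0+d$, which directly exhibits $B\oplus\sev{a_0}$ as an algebraic subextension; minimality of $B\leq A$ then forces $A=\sev{B,a_0}$. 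You instead bypass the coefficient chase entirely: after normalizing $v_0\in W$, $v_1=w+u$ with $u\in F\setminus\{0\}$, you set $W'=W+\sev{u}$, observe $v_0\wedge v_1\in N(W')\setminus N(W)$ so that $\delta(W'/W)=0$, hence $W'\leq A+f(A)$, and invoke Corollary~\ref{minimal-free-pseudosolution} (minimality of $W\leq A+f(A)$) to conclude $W'=A+f(A)$, i.e. $\ldim(A/B)=1$; the algebraic nature of $B\le A$ then follows from a one-line dimension count. This is a cleaner, more structural argument, at the small cost of leaning slightly harder on the preparatory lemmas: the equality $\bwsq W'\cap N(A+f(A))=N(W')$ that you invoke is proved inside Lemma~\ref{dimension-equality} rather than in the Corollary you cite (which only states the case $W'=A+f(B)$), and you should note that $u=f(a)$ for some $a\in A\setminus B$ so that $W'=A+f(A')$ with $A'=B+\sev{a}$, which is what makes the lemmas applicable. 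The final construction of $g$ and the verification $g(a_1\wedge b+c)\equiv 0\pmod{N(W)}$ are equivalent to the paper's, just organized around the $\bwsq W\oplus(W\wedge F)$ decomposition rather than the coefficient-matching of the paper.
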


\begin{proof} By the previous lemma, in such a case, the free pseudosolution $A+f(A)$ contains linearly independent vectors $v_0,v_1$ such that $v_0 \wedge v_1 \in N(A+f(A))$ (i.e. such that $[v_0,v_1]=0$).

As $N(A + f(A))$ is generated by $f(N(A))$ over $N(A + f(B))$, there are $e \in N(A)$ and $c \in N(A + f(B))$ such that $f(e) + c = v_0 \wedge v_1$.

Note first that $f(e)+c \in \big \la f(a) \wedge a' \colon a, a' \in A \big\ra + \bwsq \big( A +f(B)\big)$.

We can write $v_0 = f(a_0)+a'_0$ and $v_1 = f(\tilde a_1)+ \tilde a'_1$ with $a_0, a'_0,\tilde a_1, \tilde a'_1 \in A$. Since $M_\mu \in \mathcal{K}$, at least one of $v_i \notin A+f(B)$. We may assume that $a_0 \in A \setminus B$.

Remark that $f(\tilde a_1) \in \la A,f(a_0)\ra$: otherwise, 
\[f(a_0) \wedge f(\tilde a_1) \notin \big \la f(a) \wedge a' \colon a, a' \in A \big\ra + \Bwsq \big( A +f(B)\big),\] which contradicts the equality 
\[ v_0 \wedge v_1 = f(a_0) \wedge f(\tilde a_1) +f(a_0) \wedge \tilde a'_1  + a'_0 \wedge  f(\tilde a_1) +a'_0 \wedge \tilde a'_1 =f(e)+c.\]

Thus, there is $\alpha \in \mathbb{F}_q$ and $a'_1 \in A$, such that $v_1 = \alpha f(a_0)+a'_1$, and in fact
\[ v_0 \wedge v_1 = f(a_0) \wedge (a'_1  -\alpha a'_0) +a'_0 \wedge  a'_1.\]

Now, complete $a_0$ to a basis $(a_0,\ldots,a_n)$ of $A$ over $B$. We can write 
\[ e = \sum \limits_{i < j \leq n} \lambda_{i,j} a_i \wedge a_j + \sum\limits_{i \leq n} a_i\wedge b_i + d\]
with $\lambda_{i,j} \in \mathbb{F}_q$, $b_i \in B$ and $d \in \bwsq B$.

Then
\[f(e) = \sum \limits_{i < j \leq n} \lambda_{i,j} f(a_i) \wedge a_j -  \sum \limits_{j<i \leq n} \lambda_{j,i} f(a_i) \wedge a_j +c'\] with 
$c' \in \big \la f(a) \wedge b \colon a \in A,\, b\in B \big\ra + \bwsq \big( A +f(B)\big)$.

But the family $\big\{f(a_i)\wedge a_j \colon i\neq 0,\, j \neq i\big\}$ is free over 
\[\big \la f(a_0) \wedge a' \colon a' \in A \big\ra+  \big \la f(a) \wedge b \colon a \in A,\, b\in B \big\ra + \Bwsq \big( A +f(B)\big).\]
Therefore the equality $f(e)+c = v_0\wedge v_1$ yields that $\lambda_{j,i} =0$ for all $j<i\leq n$ and thus 
\[e = \sum\limits_{i \leq n} a_i\wedge b_i + d.\]

This yields \[f(e) = \sum\limits_{i \leq n} f(a_i)\wedge b_i +c^*\]
with $b_i \in B$ and $c^* \in \bwsq \big( A +f(B)\big)$.

Note that if $(a_0^*,\ldots,a_n^*)$ is a family of vectors of $A$, then the family $\{f(a_i)\wedge a_i^*  \colon a^*_i \neq 0, i = 0 , \cdots , n \}$ is free over $\bwsq \big( A +f(B)\big)$. Thus, 
the equality $f(e)+c = v_0\wedge v_1$ yields that $b_0 = a'_1-\alpha a'_0$ and $b_i =0$ for all $i>0$.

We have  $e = a_0 \wedge b_0 +d$, where $a_0 \in A \setminus B$, $b_0 \in B$ and $d \in \bwsq B$. Since $v_0$, $v_1$ are linearly independent we have $b_0 = a'_1-\alpha a'_0 \neq 0$. Thus, $e \in N(A) \setminus N(B)$ and $B \oplus \sev{a_0}$ defines an algebraic extension of $B$. By minimality of $B \leq A$, we deduce that $A = \sev{B,a_0}$.

Since $a'_1 = \alpha a'_0 + b_0$, we have $v_0 \wedge v_1 = f(a_0) \wedge b_0 + a'_0 \wedge a'_1 = f(a_0) \wedge b_0 + a'_0 \wedge b_0$.
Then the equality $f(e)+c = v_0 \wedge v_1$ yields the following equality in $\bwsq \big(A+f(B)\big)$  
\[a_0 \wedge f(b_0) +f(d)+c = a'_0 \wedge b_0.\]

Let us consider the linear map $g : A \rightarrow A+f(B)$ such that $g \vert _{B} = f$ and $g(a_0) = -a'_0$. We claim that $g$ is a solution to the derivation extension problem $(B \leq A, f)$. 

By Remark \ref{rem-deriv}, it is enough to show that $g(e) \in N(A+f(B))$. We have:
\begin{align*}
g(e) & = g\left(a_0 \wedge b_0+ d\right) \\
& = g(a_0) \wedge b_0 + a_0 \wedge g(b_0) + g(d) \\
& = -a'_0 \wedge b_0 + a_0 \wedge f(b_0) + f(d) \\
& = -c \in N(A+f(B)).
\end{align*}

This $g$ is thus the solution we were looking for.

\end{proof}

Now consider the case (B):
\begin{lemma}[Case (B)]\label{free-pseudosolution-not-inKmu}
Suppose that the free pseudosolution of a minimal derivation extension problem $(B \leq A, f)$ belongs to $\mathcal{K}$, but does not belong to $\mathcal{K}_{\mu}$. Then the linear map $f:B \rightarrow B+f(B)$ can be extended again to a linear map $g : A \rightarrow A + f(B)$ such that $g(N(A)) \subseteq N(A+f(B))$, which gives a solution of the extension problem.
\end{lemma}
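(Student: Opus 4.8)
\noindent\textit{Proof proposal.}
The plan is to feed the free pseudosolution into Baudisch's structural dichotomy for minimal extensions falling out of $\mathcal{K}_\mu$ (Fact~\ref{fact_out_Kmu}) and then exploit the ``extra realization'' of a good code that it produces to collapse the free pseudosolution back into $A+f(B)$. Write $D=A+f(B)$ and let $D'=A+f(A)$ be the free pseudosolution, equipped with the free pseudosolution derivation $f\colon A\to D'$. Since $D\leq V_\mu$ is finite we have $D\in\mathcal{K}_\mu$; by Corollary~\ref{minimal-free-pseudosolution} the extension $D\leq D'$ is minimal and strong, and it lies in $\mathcal{K}\setminus\mathcal{K}_\mu$ by hypothesis. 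Hence Fact~\ref{fact_out_Kmu} applies: the extension $D\leq D'$ is prealgebraic, so by Lemma~\ref{dimension-equality} the extension $B\leq A$ is prealgebraic too and $\ldim(A/B)=\ldim(D'/D)$; moreover there is a good code $\alpha\in\mathcal{C}$ for which case~(a) or case~(b) of the Fact holds. Let $n$ be the length of the variable tuple $\ox$ of $\varphi_\alpha$; since a $w$-generic realization of $\varphi_\alpha(\ox,\,\cdot\,)$ over a subspace is linearly independent over it, $\ldim(D'/D)=n$, and $n>1$ as the extension is prealgebraic.

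I would first rule out case~(b) by a dimension count. In that case there are a subspace $D\subseteq E\subseteq D'$, a parameter $b$, and realizations $\oee,\oee'\in D'$ of $\varphi_\alpha(\ox,b)$ with $\oee$ $w$-generic over $E$ and $\oee'$ $w$-generic over $E+\sev{\oee}$. Then $\sev{\oee}$ contributes $n$ linear dimensions over $E$ and $\sev{\oee'}$ another $n$ over $E+\sev{\oee}$, so $\ldim\big((E+\sev{\oee,\oee'})/D\big)\geq 2n$; but $E+\sev{\oee,\oee'}$ sits inside $D'$, so this is at most $\ldim(D'/D)=n$, forcing $n\leq 0$ --- impossible. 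Therefore case~(a) holds: $D'=D+\sev{\oee}$ for a $w$-generic realization $\oee$ of $\varphi_\alpha(\ox,b)$ over $D$, with $b\in\dcl^{eq}(D)$, and there is a further realization $\oee'$ of $\varphi_\alpha(\ox,b)$ lying inside $D$.

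The heart of the argument is then to collapse $D'$ onto $D$ by substituting $\oee'$ for $\oee$. Applying the good code properties to $b\in\dcl^{eq}(D)$ (using $D\leq V_\mu$) gives terms $\Phi_i$ and parameters $\oc_i\in D$, $\psi_i\in\bwsq D$ (for $i<n$), not depending on the chosen realization, such that $N(D')=N(D)\oplus\sev{\Phi_i(\oee,\oc_i,\psi_i)\colon i<n}$ while $\Phi_i(\oee',\oc_i,\psi_i)\in N(V_\mu)\cap\bwsq D=N(D)$ for all $i$. Define the linear map $h\colon D'\to D$ by $h\vert_D=\mathrm{id}_D$ and $h(\oee)=\oee'$ (well defined since $D'=D\oplus\sev{\oee}$), extended to exterior squares in the usual way. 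Since each $\Phi_i$ is built from the wedge product by $\mathbb{F}_q$-linear operations and $h$ fixes the $\oc_i$ and $\psi_i$, we get $h(\Phi_i(\oee,\oc_i,\psi_i))=\Phi_i(\oee',\oc_i,\psi_i)\in N(D)$; as $h$ fixes $N(D)$ it follows that $h(N(D'))\subseteq N(D)$, so $h$ induces a graded Lie algebra morphism onto the subalgebra generated by $D$ (Remark~\ref{rem_morphism}). Now put $g:=h\circ f\colon A\to A+f(B)$. Then $g$ is linear, it agrees with $f$ on $B$ because $f(B)\subseteq D$ and $h$ fixes $D$, and $g(N(A))=h(f(N(A)))\subseteq h(N(D'))\subseteq N(A+f(B))$ since $f(N(A))\subseteq N(D')$ by construction of the free pseudosolution. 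By Remark~\ref{rem-deriv}, $g$ is a partially defined graded derivation extending $f$, with $A+g(A)=A+f(B)\leq V_\mu$: this is the desired solution of $(B\leq A,f)$.

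The step I expect to be the main obstacle --- or at least the one requiring care --- is the bookkeeping in case~(a): one must ensure that the defining relations of the abstract free pseudosolution extension $D'/D$ are expressed through exactly the same instances $\Phi_i(\,\cdot\,,\oc_i,\psi_i)$ of the code terms that the genuine realization $\oee'\in D\subseteq V_\mu$ satisfies, so that the substitution $\oee\mapsto\oee'$ carries $N(D')$ into $N(D)$. This is precisely the uniformity built into Baudisch's good codes (the same $\oc_i,\psi_i$ work for every realization of $\varphi_\alpha(\ox,b)$), but it is the point where the internal structure of his collapse, rather than a purely formal manipulation, is used. A secondary point is to translate carefully the $w$-genericity clauses of Fact~\ref{fact_out_Kmu} into the linear-dimension equalities that eliminate case~(b).
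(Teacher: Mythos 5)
Your handling of case~(a) is essentially the paper's proof: define the retraction $\pi\colon D'\to D$ that fixes $D$ pointwise and sends $\oee\mapsto\oee'$, verify it carries $N(D')$ into $N(D)$ using the good-code terms (so it is a graded Lie algebra morphism), and set $g=\pi\circ f$. That part is correct.

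The dimension count you use to dismiss case~(b), however, has a genuine gap. You let $n$ be the length of the code tuple $\ox$ and assert $\ldim(D'/D)=n$ because ``a $w$-generic realization is linearly independent''. That identification is only valid in case~(a), where $D'=D+\sev{\oee}$. In case~(b) the code that Fact~\ref{fact_out_Kmu} produces may have a \emph{different} tuple length $m$, and nothing forces $E+\sev{\oee,\oee'}$ to exhaust $D'$. What you actually get from the linear independence of $\oee$ over $E$ and of $\oee'$ over $E+\sev{\oee}$ is $2m+\ldim(E/D)\leq\ldim(D'/D)$, i.e. $m\leq\ldim(D'/D)/2$. This is perfectly consistent (indeed it is exactly how one sees that the case-(b) code has $m<\ldim(A/B)$) and yields no contradiction. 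Ruling out case~(b) genuinely requires the extra structure of the free pseudosolution: one must expand $N(A+f(A))=N(A+f(B))+f(N(A))\subseteq\bwsq\big(A+f(B)\big)+\la f(a)\wedge a'\colon a,a'\in A\ra$, use the resulting freeness constraints to show that in the code terms $\Phi_i$ the quadratic coefficients $\lambda_{ijk}$ vanish and the linear coefficients $c_{ij}$ lie in $A+f(B)$, and then observe that the differences $\Phi_i(\oee)-\Phi_i(\oee')=\sum_j c_{ij}\wedge(e_j-e'_j)$ produce $m$ independent relations in $N(A+f(A))$ over $N(A+f(B))$ involving only the $m$-dimensional subspace $\la e_j-e'_j\ra$, contradicting minimality of $A+f(B)\leq A+f(A)$ since $m<\ldim(A/B)$. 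Without this bilinear-algebra step the argument is incomplete.
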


\begin{proof}

By Corollary \ref{minimal-free-pseudosolution} $A+f(A)$ is a minimal strong extension of $A+f(B)$ and thus, by Fact \ref{fact_out_Kmu}, this can happen for two different reasons:

\begin{enumerate}[(a)]
\item There is a good code $\alpha \in \mathcal{C}$ and $b \in \dcl^{eq}(A+f(B))$ such that $A+f(A)= \langle A+f(B), \oee \rangle$ with $\oee$ w-generic in $\varphi_\alpha(\ox,b)$ over $A+f(B)$. Moreover, there is $\oee' \in A+f(B)$ which realizes  $\varphi_\alpha(\ox,b)$ (in this case $\ox$ is an $n$-tuple where $n = \ldim(A/B)$);
\item There exist a vector subspace $A+f(B) \subseteq E \subseteq A+f(A)$, a good code $\alpha \in \mathcal{C}$, an imaginary $b \in \dcl^{eq}(E)$, and realizations $\oee$ and $\oee'$ of $\varphi_\alpha(\ox,b)$ such that $\oee$ is w-generic over $E$ and $\oee'$ is w-generic over $\sev{E,\oee}$ (in this case $\ox$ is an $m$-tuple with $m<\ldim(A/B)$).
\end{enumerate}

Let us take care of subcase (a) first. In this case, there are $n$ terms $(\Phi_i)_{i<n}$:  
\[\Phi_i(x_0,\ldots,x_{n-1}) = \sum_{j<k<n} \lambda_{ijk} x_j \wedge x_k + \sum_{j<n} c_{ij} \wedge x_j + \psi_i\]
where $\lambda_{ijk} \in \mathbb{F}_p$, $c_{ij} \in A+f(B)$, $\psi_i \in \bwsq \big( A +f(B)\big)$, and such that 
$\Phi_i(\oee') \in N(A+f(B))$ for $i<n$ and
\[N(A+f(A)) = N(A+f(B)) \oplus \langle \Phi_i(\oee) \rangle_{i<n}.\]

Consider $\pi$  the linear map $A + f(A) \rightarrow A + f(B)$ such that $\pi\vert_{A + f(B)} = \mathrm{Id}$ and $\pi(\oee) = \oee'$. 

\begin{claim}

The linear map $\pi$ induces a Lie algebra morphism from $A+f(A)$ onto $A + f(B)$.

\end{claim}

\begin{proof}
Remember that $\pi$ extends canonically to a Lie algebra morphism from $F_2(A+f(A))$ to $F_2(A+f(B))$. Then, $\pi(\Phi_i(\oee)) = \Phi_i(\oee')$ and thus $\pi(N(A+f(A)) = N(A+f(B))$. By Remark \ref{rem_morphism}, $\pi$ induces a Lie algebra morphism from $A+f(A)$ to $A+f(B)$.
\end{proof}

Now, consider the linear map $g = \pi \circ f : A \rightarrow A+f(B)$.
\begin{claim} The linear map $g$ induces a partially defined graded derivation from $A$ to $A+f(B)$.
\end{claim}

\begin{proof}
By Remark \ref{rem-deriv} we have to check that $g(N(A)) \subseteq N(A+f(B))$ where $g$ is the canonically partially defined derivation from $F_2(A)$ to $F_2(A+f(B))$. Note that $g$ on $F_2(A)$ is equal to the composition of the Lie algebra morphism $\pi$ on $F_2(A+f(A))$ with the derivation $f$ on $F_2(A)$. Indeed, if $x,y \in A$, we obtain
\begin{align*}
\pi(f(x\wedge y)) & = \pi(f(x) \wedge y + x \wedge f(y))= \pi(f(x))\wedge \pi(y)+ \pi(x) \wedge \pi(f(y)) \\
& = \pi(f(x)) \wedge y + x \wedge \pi(f(y)) = g(x) \wedge y + x \wedge g(y) = g(x \wedge y).
\end{align*}

Thus, \[g(N(A)) = \pi(f(N(A))\subseteq \pi(N(A+f(A))= N(A+f(B)).\]

\end{proof}

Since the partially defined derivation $g : A  \rightarrow A+f(B)$ extends the derivation $f_{\vert B} : B \rightarrow B+f(B)$, the previous claim gives a solution in  $\mathcal{K}_{\mu}$ to our derivation extension problem.

We are now going to show that in the specific case of derivation extension problems, subcase (b) cannot happen.

By way of contradiction, suppose that $A + f(B) \leq A + f(A)$ is of type (b). In this case,
\[\ldim(E+\langle \oee,\oee'\rangle/E) = \ldim(N(E+\langle \oee,\oee'\rangle)/N(E))=2m\] 
and the linear map which sends $\oee$ on $\oee'$ over $E$ induces a Lie algebra isomorphism over $E$ between $E+\langle \oee \rangle$ and $E+\langle \oee' \rangle$.

Thus, there are $m$ terms $(\Phi_i)_{i<m}$:  
\[\Phi_i(x_0,\ldots,x_{m-1}) = \sum_{j<k<m} \lambda_{ijk} x_j \wedge x_k + \sum_{j<m} c_{ij} \wedge x_j + \psi_i\]
where $\lambda_{ijk} \in \mathbb{F}_p$, $c_{ij} \in E$, $\psi_i \in \bwsq E$, and such that 
\[N(E+\langle \oee,\oee'\rangle) = N(E) \oplus \langle \Phi_i(\oee), \Phi_i(\oee') \rangle_{i<n} \subseteq N(A+f(A)).\]

Recall that 
\[N(A+f(A)) =N(A+f(B))+f(N(A)) \subseteq  \Bwsq \big( A +f(B)\big) + \langle f(a)\wedge a' \colon a,a'\in A\rangle.\] 

Since $A+f(B) \subseteq E \subseteq A+f(A)$ and $\oee$ is a tuple of $m$ linearly independent vectors in $A+f(A)$ over $E$, there exist a vector subspace $B \subseteq A_0 \subset A$ such that $E = A+f(A_0)$ and linearly independent vectors $a_0,\ldots,a_{m-1} \in A$ over $A_0$ such that $\oee = (f(a_0)+v_0,\ldots, f(a_{m-1})+v_{m-1})$ where $v_j \in A+f(B)$. 
If $(c_l)_{l<m}$ is a family of vectors in $E \setminus (A+f(B))$, the family 
\[(f(a_j)\wedge f(a_k), c_l\wedge f(a_l) \colon j<k<m, l<m)\] is free over $\bwsq E + \langle f(a)\wedge v \colon a \in A,v\in A+f(B)\rangle$. It follows that $\lambda_{ij} =0$ and $c_{ij} \in A+f(B)$ for all $i,j$.

The difference between the equations $\Phi_i(\oee)$ and $\Phi_i(\oee')$ gives us $m$ linearly independent equations in $N(A+f(A))$ over $N(A+f(B))$: 

\begin{align*}
\sum_{j<m} c_{ij}\wedge (e_j-e'_j) \in N(A+f(A)).
\end{align*}

Then, $\delta(A+f(B)+\langle e_j-e'_j \colon j<m\rangle /A+f(B)) = 0$ which contradicts the minimality of the extension $A+f(A)$ over $A+f(B)$ (since $m<n = \ldim(A+f(A))/A+f(B))$).

Therefore subcase (b) cannot happen.
\end{proof}

Finally, in case (C), we conclude by using richness of $M_\mu$:

\begin{prop}\label{der-ext-sol}
Every  minimal derivation extension problem $(B \leq A,f)$, has a solution: i.e. there is a partially defined derivation $g : A \rightarrow A+g(A)$ extending $f$, such that $A + g(A) \leq V_{\mu}$.
\end{prop}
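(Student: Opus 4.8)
The plan is to split into the three cases (A), (B), (C) identified before the statement, and observe that Lemmas \ref{free-pseudosolution-not-inK} and \ref{free-pseudosolution-not-inKmu} have already resolved cases (A) and (B): in each of those, the linear map $f$ on $B$ extends to a linear map $g: A \to A+f(B)$ with $g(N(A)) \subseteq N(A+f(B))$, hence by Remark \ref{rem-deriv} to a partially defined graded derivation $g : A \to A + f(B)$, and since $A + f(B) \leq V_\mu$ by hypothesis of the derivation extension problem, $A + g(A) = A + f(B) \leq V_\mu$ and we are done. So the only remaining work is case (C), where the free pseudosolution $A + f(A)$ already lies in $\mathcal{K}_\mu$.

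For case (C), I would argue as follows. By Corollary \ref{minimal-free-pseudosolution}, the free pseudosolution $A + f(A)$ is a (minimal) strong extension of $A + f(B)$ inside the abstract Lie algebra $\mathfrak{g}$. Since $A + f(B) \leq V_\mu$ by assumption, $A + f(B)$ is a finite self-sufficient subspace of $V_\mu$; and $A + f(A)$ is, by assumption of case (C), a finite strong extension of $A+f(B)$ lying in $\mathcal{K}_\mu$. Richness of $M_\mu$ (the defining property in the Theorem quoted from \cite{baudisch2009additive,baudisch1996new}) then provides a strong embedding $h$ of $A + f(A)$ into $V_\mu$ fixing $A + f(B)$ pointwise. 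Because the embedding is over $A + f(B) \supseteq A$, the composition $g = h \circ f : A \to V_\mu$ still restricts to $f$ on $B$, and it is a partially defined graded derivation $A \to V_\mu$ since $h$ is a (graded) Lie algebra embedding and $f$ is a partially defined graded derivation (one checks $g(N(A)) = h(f(N(A))) \subseteq h(N(A+f(A))) \subseteq N(V_\mu)$, using $h$ strong so that $N$ is respected). Finally, $A + g(A) = h(A + f(A))$ is strong in $V_\mu$ because $h$ is a strong embedding and $A+f(A) \leq \mathfrak{g}$, which is exactly what we need: $(B \leq A, f)$ has the solution $g$.

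I do not expect a serious obstacle here: the genuine difficulty of the argument was already absorbed into the earlier lemmas (especially the bilinear-algebra bookkeeping of Lemma \ref{free-pseudosolution-not-inK} and the verification that subcase (b) of Fact \ref{fact_out_Kmu} cannot occur in Lemma \ref{free-pseudosolution-not-inKmu}). The mild point to be careful about is making sure the embedding supplied by richness is genuinely over $A$, not merely over $B$, so that $h$ commutes with $f$ in the sense needed; this is automatic because $A \subseteq A + f(B)$ and the embedding is over $A + f(B)$. One should also note that in cases (A) and (B) the solution $g$ has image in $A + f(B)$, so no appeal to richness is needed there, whereas in case (C) the image of $g$ genuinely uses the ambient richness of $V_\mu$.
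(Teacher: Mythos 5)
Your proposal is correct and matches the paper's proof exactly: cases (A) and (B) are dispatched by Lemmas \ref{free-pseudosolution-not-inK} and \ref{free-pseudosolution-not-inKmu}, and case (C) uses richness of $M_\mu$ to obtain a strong embedding $h$ of $A+f(A)$ over $A+f(B)$ and sets $g = h\circ f$. The extra verifications you supply (that $g$ restricts to $f$ on $B$ because $f(B)\subseteq A+f(B)$ is fixed by $h$, and that $g(N(A))\subseteq N(V_\mu)$) are correct and merely make explicit what the paper leaves implicit.
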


\begin{proof}

For any such extension problem, if the free pseudosolution $A+f(A)$ is in $\mathcal{K}_\mu$, case (C), then by richness of $M_{\mu}$, there is a strong embedding $h$ of $A+f(A)$ in $V_{\mu}$ over $A + f(B)$ and we can take $g= h\circ f$. Otherwise, we can extend $f$ by a partially defined derivation $g : A \rightarrow A+f(B)$ (Lemma \ref{free-pseudosolution-not-inK} and \ref{free-pseudosolution-not-inKmu}).

\end{proof}

Let us show how we can use this to construct derivations on $M_{\mu}$ by reducing every configuration to a minimal derivation extension problem. Given a partially defined derivation $f$ on $B$ and a strong extension $A$ of $B$, there is no reason \emph{a priori} for $A+f(B)$ to be strong in $V_\mu$ and we cannot directly apply the previous proposition in order to extend the derivation. In that case, we will extend in several steps to the self-sufficient closure of $A+f(B)$: 

\begin{lemma}\label{der-sol-min}
Let $B \leq V_{\mu}$ be a finite strong subspace and $f : B \rightarrow B + f(B)$ a partially defined graded derivation with $B + f(B) \leq V_{\mu}$. 

If $B \leq A \leq V_\mu $ is a minimal strong extension of $B$ in $V_{\mu}$ such that $A+f(B)$ is not self-sufficient in $V_{\mu}$, then 
\begin{itemize}
\item $\delta(A/B)=1$ and
\item the partially defined derivation $f$ can be extended to a partially defined graded derivation on $\tilde{A}$ where $\tilde{A}$ is the self sufficient closure of $A+f(B)$ in $V_\mu$.
\end{itemize}

\end{lemma}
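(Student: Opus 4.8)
I would prove the two conclusions separately, the first by a submodularity computation. Since $A\leq V_{\mu}$ (being a minimal strong extension of $B$) and $B+f(B)\leq V_{\mu}$, the intersection $A\cap(B+f(B))$ is self-sufficient in $V_{\mu}$, hence in $A$, and lies between $B$ and $A$; by minimality of $B\leq A$ it is $B$ or $A$. It cannot be $A$, for then $A\subseteq B+f(B)$ and $A+f(B)=B+f(B)\leq V_{\mu}$, against the hypothesis. So $A\cap(B+f(B))=B$, and submodularity of the predimension gives $\delta\big((A+f(B))/(B+f(B))\big)\leq\delta(A/B)$. If $\delta(A/B)$ were $0$, then for every $C$ with $A+f(B)\subseteq C\subseteq V_{\mu}$ finitely generated over $A+f(B)$, additivity of $\delta$ in the tower $B+f(B)\subseteq A+f(B)\subseteq C$ together with $B+f(B)\leq V_{\mu}$ would give $\delta\big(C/(A+f(B))\big)\geq\delta\big(C/(B+f(B))\big)\geq 0$, i.e. $A+f(B)\leq V_{\mu}$ — impossible. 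Since, as recalled above, a minimal strong extension has $\delta(A/B)\in\{0,1\}$, we conclude $\delta(A/B)=1$.

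For the extension of $f$: from $\delta(A/B)=1$ and $\ldim(A/B)=1$ we get $A=B\oplus\sev{a}$ with $N(A)=N(B)$, so by Remark~\ref{rem-deriv}, setting $g(a)=0$ (say) already extends $f$ to a partially defined graded derivation $g\colon A\to V_{\mu}$: there is no new relation to respect. It then remains to push $g$ up to $\tilde A$. Observe that $(B\leq\tilde A,f)$ is a genuine derivation extension problem — $f(B)\subseteq A+f(B)\subseteq\tilde A$ gives $\tilde A+f(B)=\tilde A\leq V_{\mu}$ — though not a minimal one. The plan is to fix a chain of minimal strong extensions $A=F_0\leq F_1\leq\cdots\leq F_r=\tilde A$ inside $V_{\mu}$ (possible since $A\leq\tilde A\leq V_{\mu}$) and extend $g$ one step at a time with Proposition~\ref{der-ext-sol}, checking at each stage that $(F_i\leq F_{i+1},g_i)$ really is a derivation extension problem, i.e. that $F_{i+1}+g_i(F_i)\leq V_{\mu}$. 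Since Proposition~\ref{der-ext-sol} controls the image $g_i(F_i)$ only up to self-sufficiency (it may leave $\tilde A$), one must arrange the induction so that ``domain $+$ image'' stays self-sufficient throughout — if necessary letting the chain overshoot $\tilde A$ inside $V_{\mu}$ and restricting the resulting graded derivation back to $\tilde A$ at the end (a restriction of a graded derivation to a graded subalgebra is again one, and it still annihilates $N(\tilde A)\subseteq N(V_{\mu})$).

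I expect this last bookkeeping to be the main obstacle: because $A+f(B)$ is not self-sufficient, neither $(B\leq A,f)$ nor $(A\leq\tilde A,g)$ is a \emph{minimal} derivation extension problem, so Proposition~\ref{der-ext-sol} cannot be invoked directly; the work lies in organizing the intermediate minimal steps, and in choosing where the derivation sends the new basis vectors, so that self-sufficiency of ``domain $+$ image'' is never lost. The equality $N(A)=N(B)$ — which lets one start from a fully free extension $g$ on $A$ — and the fact that $\tilde A$ is precisely the self-sufficient closure of $A+f(B)$ should be what makes the iteration go through.
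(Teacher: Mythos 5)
Your submodularity argument for $\delta(A/B)=1$ is essentially the paper's own: the intersection $A\cap(B+f(B))$ is $B$ by minimality, and then $0\leq\delta(A+f(B)/B+f(B))\leq\delta(A/B)\leq 1$, with the left inequality strict (else $A+f(B)$ would inherit strongness from $B+f(B)$ by additivity in the tower). That part is fine.

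The second part has a genuine gap, and you sense it yourself. Your plan is to first set $g(a)=0$ on $A$ and then climb a chain $A=F_0\leq F_1\leq\cdots\leq F_r=\tilde A$ using Proposition~\ref{der-ext-sol}. But at the very first step the invariant is already broken: $A+g(A)=A+f(B)$, which is not self-sufficient by hypothesis, so $(A\leq F_1,g)$ is not a legitimate derivation extension problem and Proposition~\ref{der-ext-sol} does not apply. Choosing some other value for $g(a)$ does not obviously help: since $\delta(\tilde A/A)>0$, your chain from $A$ to $\tilde A$ necessarily contains a transcendental step, and there is no mechanism guaranteeing that $F_{i+1}+g(F_i)$ is strong along that step. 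The remark that $N(A)=N(B)$, while true, is orthogonal to this difficulty: the obstruction is not Leibniz compatibility but self-sufficiency of the domain-plus-image. Your closing sentence ("one must arrange the induction so that domain $+$ image stays self-sufficient throughout") names precisely the missing content without supplying it.

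The paper resolves this by a different decomposition of the chain. It never passes through $A$ as an intermediate domain. Instead it first climbs $B=B_0\leq B_1\leq\cdots\leq B_n=B+f(B)$: here each $B_{i+1}\subseteq B+f(B)\subseteq B_i+f(B_i)$, so $B_{i+1}+f(B_i)=B_i+f(B_i)\leq V_\mu$ automatically, and Proposition~\ref{der-ext-sol} applies at every step. Then it climbs $B+f(B)=C_0\leq\cdots\leq C_n=\tilde A$. The key observation you are missing is that $\delta(\tilde A/B+f(B))=0$ (because $A+f(B)$ is not strong while $\delta(A+f(B)/B+f(B))=1$), so every minimal step $C_i\leq C_{i+1}$ has $\delta(C_{i+1}/C_i)=0$. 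This $\delta=0$ condition, combined with $C_i+f(C_i)\leq V_\mu$ from the previous step and submodularity, forces $\delta\big(C_{i+1}+f(C_i)/C_i+f(C_i)\big)=0$ and hence $C_{i+1}+f(C_i)\leq V_\mu$. Thus each step is a minimal derivation extension problem and the induction closes. Routing the chain through $B+f(B)$ rather than through $A$ is what makes the bookkeeping work.
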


\begin{proof}

Since $B \leq A \leq V_\mu$ and $B+f(B) \leq V_\mu $, by submodularity of $\delta$, we have \[B \leq A \cap (B + f(B)) \leq A.\] 
By minimality of $B \leq A$, there are two cases, either $A \cap (B + f(B)) = A$ or $A \cap (B + f(B)) = B$.
In the first case,  $A \subseteq B + f(B)$, and since $B \subseteq A$, we get $A + f(B) = B + f(B) \leq V_{\mu}$, a contradiction. 

Thus,  $A \cap (B + f(B)) = B$. Again, by submodularity, \[0\leq \delta(A + f(B)/B + f(B)) \leq \delta(A/B) \leq 1.\]Since $A + f(B)$ is not self-sufficient in $V_{\mu}$, we have necessarily \[\delta(A + f(B)/B + f(B)) = \delta(A/B)=1.\]

As $B \leq B + f(B)$, we decompose it into a tower of minimal extensions $B = B_0 \leq \cdots \leq B_n = B + f(B)$. Note that $B_1 \cap (B + f(B)) = B + f(B)$ as $B_1 \subset B + f(B)$. By the previous discussion, this yields $B_1 + f(B) \leq V_{\mu}$, and we can extend $f$ to $B_1$ using Proposition \ref{der-ext-sol}. Iteratively, we extend $f$ to $B + f(B)$.
Let $\tilde{A}$ be the self sufficient closure of $A+f(B)$. Because $A + f(B)$ is not self-sufficient in $V_{\mu}$ and $\delta(A + f(B)/B + f(B)) = 1$ we obtain $\delta(\tilde{A}/B+f(B)) = 0$.

There is a sequence of minimal extensions $B+f(B) = C_0 \leq \cdots \leq C_n = \tilde{A}$. Moreover, as $\delta(\tilde{A}/A+f(B)) = 0$, all these extensions are either prealgebraic or algebraic. In particular, we can iteratively extend $f$ to each of them using Proposition \ref{der-ext-sol}, since $\delta(C_{i+1}/C_i)=0$ imposes at each step that $C_{i+1}+f(C_i)\leq V_\mu$.

\end{proof}

\begin{theorem}\label{der-extension}

Let $B \leq V_{\mu}$ finite and $f : B \rightarrow B + f(B)$ be a partially defined graded derivation, with $B + f(B) \leq V_{\mu}$. Let $a \in V_{\mu}$. There exists $B \leq A$ with $a \in A$ and a graded extension $f : A \rightarrow A + f(A)$ to $A$ with $A \leq V_{\mu}$ and $A + f(A) \leq V_{\mu}$.

\end{theorem}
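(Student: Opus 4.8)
The plan is to reduce to the minimal derivation extension problems handled by Proposition \ref{der-ext-sol}, using Lemma \ref{der-sol-min} to handle the steps where $A + f(B)$ fails to be self-sufficient. First I would decompose the extension $B \leq \langle B, a\rangle^{\mathrm{ss}}$ (the self-sufficient closure of $\langle B, a\rangle$ in $V_\mu$) as a finite tower of minimal strong extensions $B = A_0 \leq A_1 \leq \cdots \leq A_k$, with $a \in A_k$; such a tower exists because $B$ is finite and $V_\mu$ has finite-dimensional-over-$B$ self-sufficient closures. Then I would extend $f$ one minimal step at a time, maintaining the invariant that at stage $i$ we have a partially defined graded derivation $f \colon A_i \to A_i + f(A_i)$ with both $A_i \leq V_\mu$ and $A_i + f(A_i) \leq V_\mu$.

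For the inductive step, suppose $f$ is defined on $A_i$ with $A_i \leq V_\mu$ and $A_i + f(A_i) \leq V_\mu$, and consider the minimal strong extension $A_i \leq A_{i+1}$. There are two cases. If $A_{i+1} + f(A_i)$ is self-sufficient in $V_\mu$, then $(A_i \leq A_{i+1}, f)$ is a minimal derivation extension problem in the sense of the definition, and Proposition \ref{der-ext-sol} directly yields a graded extension $g \colon A_{i+1} \to A_{i+1} + g(A_i)$ with $A_{i+1} + g(A_i) \leq V_\mu$; note $g(A_{i+1}) = g(A_i) + g(\text{new generators})$ and by construction the target is strong, so we may rename $g$ as $f$ and the invariant is preserved (with $A_{i+1} \leq V_\mu$ holding since it is a term of our tower). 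If instead $A_{i+1} + f(A_i)$ is \emph{not} self-sufficient in $V_\mu$, then Lemma \ref{der-sol-min} applies: it tells us $\delta(A_{i+1}/A_i) = 1$ and produces a partially defined graded derivation extending $f$ to $\tilde A$, the self-sufficient closure of $A_{i+1} + f(A_i)$ in $V_\mu$, with $\tilde A \leq V_\mu$; and since $A_{i+1} \subseteq A_{i+1} + f(A_i) \subseteq \tilde A$, this in particular defines $f$ on $A_{i+1}$. We then replace $A_{i+1}$ by $\tilde A$ as the new $A$ in the tower — more precisely, we re-decompose $\tilde A$ over $A_{i+1}$ into further minimal strong steps and incorporate them, or simply observe that $f$ is now defined on a strong subspace $\tilde A \supseteq A_{i+1}$ with strong target, so we may continue from $\tilde A$ (after re-running the tower decomposition from $\tilde A$ up to a self-sufficient closure containing $a$, which still terminates by finiteness).

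After finitely many steps we reach a strong $A \ni a$ with $B \leq A \leq V_\mu$ and a graded derivation $f \colon A \to A + f(A)$ with $A + f(A) \leq V_\mu$, as required. The main subtlety — and the reason the argument is phrased via self-sufficient closures rather than a naive one-step-at-a-time induction — is precisely the case where $A_{i+1} + f(A_i)$ escapes self-sufficiency: one cannot apply Proposition \ref{der-ext-sol} as stated, and must invoke Lemma \ref{der-sol-min}, which both pins down that we are in the transcendental case $\delta(A_{i+1}/A_i) = 1$ and carries out the genuinely new work of extending through an entire self-sufficient closure by a secondary tower of prealgebraic and algebraic steps. Bookkeeping that this whole process terminates (the dimension of the current self-sufficient closure over $B$ is bounded, and each macro-step strictly increases the subspace on which $f$ is defined while keeping it inside a fixed finite-dimensional self-sufficient closure of $\langle B, a\rangle$) is routine but should be stated explicitly.
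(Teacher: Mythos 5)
Your overall strategy --- decompose into minimal strong extensions, apply Proposition \ref{der-ext-sol} when $A_{i+1}+f(A_i)$ is already self-sufficient, and invoke Lemma \ref{der-sol-min} when it is not --- is the same as the paper's, and that much is fine. But the termination argument is where the real content lies, and the one you sketch does not work.

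You claim the process stays ``inside a fixed finite-dimensional self-sufficient closure of $\langle B,a\rangle$.'' It does not. When Lemma \ref{der-sol-min} kicks in, you pass to $\tilde A$, the self-sufficient closure of $A_{i+1}+f(A_i)$; this contains $f(A_i)$ and therefore typically escapes the self-sufficient closure $A'$ of $\langle B,a\rangle$ entirely. Re-running the tower decomposition ``from $\tilde A$ up to a self-sufficient closure containing $a$'' then takes place in a strictly larger space, and applying $f$ to its new elements could in principle enlarge things again; nothing you have said bounds this process. The actual reason it terminates is a predimension argument that you do not invoke: since $\delta(A'/B)\leq 1$, the tower from $B$ to $A'$ has at most one transcendental step, and by Lemma \ref{der-sol-min} the non-self-sufficient step, if it occurs, must be transcendental ($\delta(A_{i_0+1}'/A_{i_0}')=1$). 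Hence there is at most one such step. After handling it, the paper redefines the remaining tower as $A_i := A_i' + A_{i_0+1}$, and proves by two submodularity computations that each $A_i$ is still strong in $V_\mu$ and that $\delta(A_{i+1}/A_i)=0$; since bad steps have $\delta=1$ by Lemma \ref{der-sol-min}, no further bad steps can occur and Proposition \ref{der-ext-sol} applies directly at every remaining stage. This is not ``routine bookkeeping'' but the crux of the proof, and it is missing from your proposal; as written, your argument has a genuine gap at the termination step.
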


\begin{proof}

Let $A'$ be the self-sufficient closure of $\sev{B,a}$. There is a sequence $B \leq A'_0 \leq \cdots \leq A'_n = A'$ of minimal extensions. Moreover, we know that $\delta(A'/B) \leq 1$, thus at most one of these extensions is transcendental. 

We extend iteratively $f$ to $A'_i$. While $A'_{i+1}+f(A'_i) \leq V_\mu$, we apply Proposition \ref{der-ext-sol}. If it is the case for all $i<n$, we obtain an extension $f$ to $A=A'\supseteq \sev{B,a}$.

Otherwise, there is $i_0<n$ such that $f$ is extended to $A'_{i_0}$ and $A'_{i_0+1}+f(A'_{i_0})$ is not self-sufficient in $V_{\mu}$. By Lemma \ref{der-sol-min}, $\delta(A'_{i_0+1}/A'_{i_0})=1$ and we can extend $f$ to $A_{i_0+1}$ where  $A_{i_0+1}$ is the self-sufficient closure of $A'_{i_0+1}+f(A'_{i_0})$. 

Now define $A_i=A'_i+ A_{i_0+1}$ for $i>i_0+1$. Since $\delta(A'/B) \leq 1$, we have $\delta(A'_{i+1}/A'_i)=0$ for all $i>i_0$. 

This implies that $A_i \leq V_{\mu}$ for all $i > i_0 $. Indeed, if $A_i \subset D$ for some $D$, then:

\begin{align*}
\delta(D/A_i) & = \delta(D) - \delta(A_{i_{0}+1} + A_i ') \\
& \geq \delta(D) - \delta(A_{i_{0}+1}) -\delta(A_i ') + \delta(A_{{i_0}+1} \cap A_i ') \text{ (submodularity)}\\
& = \delta(D/A_{i_0 + 1}) - \delta(A_i ' / A_{i_0 +1} \cap A_i ') \\
& \geq -\delta(A_i ' / A_{i_0 +1} \cap A_i ') \text{ as } A_{i_0 + 1} \leq V_{\mu}\\
& \geq 0 
\end{align*}

\noindent where the last line is a consequence of the fact that $A_i'$ is a strong extension of $A_{i_0 + 1}'$ with $\delta(A_i '/A_{i_0 +1})= 0$, and $A_{i_0 + 1}' \subset A_{i_0 +1} \cap A_i ' \subset A_i '$.

This implies that $\delta(A_{i+1}/A_i) = 0$ for all $i > i_0$, as:

\begin{align*}
\delta(A_{i+1}/A_i) & = \delta(A_i + A_{i+1}' / A_i) \\
& \leq \delta(A_{i+1}'/A_i \cap A_{i+1}') \text{ (submodularity)}\\
& \leq 0
\end{align*}

\noindent where the last line is obtained by similarly considering the strong extension $A_i ' \leq A_{i+1}'$ and $A_i ' \subset A_i \cap A_{i+1} ' \subset A_{i+1}'$. We get $\delta(A_{i+1}/A_i) = 0$ by strongness.

Again by Lemma \ref{der-sol-min}, for $i>i_0$, we have iteratively $A_{i+1}+f(A_i) \leq V_\mu$ and we can extend $f$ to $A_{i+1}$ by Proposition \ref{der-ext-sol}. Thus, we obtain an extension $f$ to $A=A_n \supseteq A' \supseteq \sev{B,a}$.
\end{proof}

By a direct induction, any \emph{finite} partially defined graded derivation can be extended to $M_\mu$:

\begin{cor}\label{der-b-a-f}
Any partially defined graded derivation $f : B \rightarrow B + f(B)$, with $B \leq B + f(B) \leq V_{\mu}$ and $B$ finite, can be extended to a graded derivation $f$ on $M_{\mu}$ (i.e. $f(V_\mu) \subseteq V_\mu$).

\end{cor}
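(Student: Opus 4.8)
The plan is to extend a finite partially defined graded derivation $f : B \to B + f(B)$ to all of $M_\mu$ by a straightforward induction using Theorem~\ref{der-extension} as the inductive step, combined with a standard enumeration/back-and-forth bookkeeping argument. Since $M_\mu$ is countable, fix an enumeration $(a_k)_{k \in \N}$ of $V_\mu$; it suffices to handle the degree one component $V_\mu$, since a graded derivation is determined by its restriction to $V_\mu$ (it extends to $\bwsq V_\mu$ by the Leibniz rule, and $N(V_\mu)$ is preserved because it is preserved on each finite piece). I would build an increasing chain of finite strong subspaces $B = B_0 \leq B_1 \leq B_2 \leq \cdots$ of $V_\mu$, together with compatible partially defined graded derivations $f_k : B_k \to B_k + f_k(B_k)$ with $B_k \leq B_k + f_k(B_k) \leq V_\mu$, such that $f_{k+1}$ extends $f_k$, $a_k \in B_{k+1}$, and $\bigcup_k B_k = V_\mu$.

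The key step is the successor stage: given $B_k \leq V_\mu$ finite, $f_k : B_k \to B_k + f_k(B_k)$ with $B_k + f_k(B_k) \leq V_\mu$, and the element $a_k \in V_\mu$, Theorem~\ref{der-extension} (applied with $B = B_k$, $f = f_k$, $a = a_k$) yields a finite strong subspace $B_{k+1} := A \geq B_k$ with $a_k \in A$, $A \leq V_\mu$, and a graded extension $f_{k+1} := f : A \to A + f(A)$ with $A + f(A) \leq V_\mu$. This is literally what the theorem provides, so the induction goes through with no extra work; the only thing to check is that the hypotheses of Theorem~\ref{der-extension} are met at each stage, which is exactly the inductive hypothesis $B_k \leq V_\mu$, $f_k$ graded partially defined, $B_k + f_k(B_k) \leq V_\mu$, all inherited from the conclusion of the previous application.

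Finally I would take $f := \bigcup_k f_k$. Because the $f_k$ are pairwise compatible linear maps on an increasing union of subspaces whose union is $V_\mu$, the union $f$ is a well-defined $\F_q$-linear map $V_\mu \to V_\mu$, and it is graded by construction (each $f_k$ sends degree one to degree one). It extends uniquely to $M_\mu$ via the Leibniz law on brackets. To see it is a genuine derivation of $M_\mu$ (i.e.\ well-defined on the quotient, $f(N(V_\mu)) \subseteq N(V_\mu)$), note that any element of $N(V_\mu)$ lies in $N(C)$ for some finite subspace $C$, and choosing $k$ with $C \subseteq B_k$ and using $N(B_k) = N(V_\mu) \cap \bwsq B_k$, the condition $f_k(N(B_k)) \subseteq N(B_k + f_k(B_k)) \subseteq N(V_\mu)$ gives what is needed. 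The Leibniz identity holds on all of $M_\mu$ because it holds on each pair of elements, which lie in some common $B_k \oplus \bwsq B_k/N(B_k)$ where $f_k$ is a derivation. There is essentially no obstacle here: the content is entirely in Theorem~\ref{der-extension}, and this corollary is the expected routine compactness/exhaustion wrap-up.
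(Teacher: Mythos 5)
Your proposal is correct and takes essentially the same approach as the paper, which leaves the argument as "By a direct induction" and relies on exactly the exhaustion you describe: enumerate $V_\mu$, iteratively apply Theorem~\ref{der-extension} to absorb each $a_k$ while maintaining the inductive hypotheses $B_k \leq V_\mu$ and $B_k + f_k(B_k) \leq V_\mu$, and take the union. Your verification that the union preserves $N(V_\mu)$ (via $N(B_k) = N(V_\mu) \cap \bwsq B_k$ and $f_k(N(B_k)) \subseteq N(B_k + f_k(B_k)) \subseteq N(V_\mu)$) is the right, if routine, check.
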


\section{A Cover without the CBP}

We are now ready to define a cover of the sort $V_\mu$ in $M_{\mu}$ that will not have the canonical base property. It will closely mirrors the example constructed in \cite{hrushovski2013canonical}.

Let us first recall a few definitions, valid in any superstable theory. 

\begin{definition}

Let $\mathcal{P}$ be a family of partial types. A stationary type $p \in S(A)$ is $\mathcal{P}$-internal (resp. almost $\mathcal{P}$-internal) if there is a set of parameters $C$, such that for any realization $a \models p$ independent from $C$ over $A$, there is a tuple $e$ of realizations of types in $\mathcal{P}$, each based over $C$, such that $a \in \dcl(e,C)$ (resp $a \in \acl(e,C)$). 

\end{definition}

\begin{fact}\label{fun-sys}

If all partial types in $\mathcal{P}$ are over some parameters $A$ and $p \in S(A)$, the parameters $C$ can be picked as a Morley sequence $a_1, \cdots , a_n$ in $p$, called a fundamental system of solutions (see \cite{pillay1996geometric}, Chapter 7, Lemma 4.2).

\end{fact}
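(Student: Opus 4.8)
The statement is classical (it is the content of the quoted lemma); I describe the plan. Recall that $p$ being $\mathcal{P}$-internal means there is a set of parameters $C$ such that every $a\models p$ with $a\ind_A C$ lies in $\dcl(\bar e,C)$ (resp.\ $\acl(\bar e,C)$) for some finite tuple $\bar e$ of realisations of non-forking extensions to $C$ of types in $\mathcal{P}$. The first move is to reduce $C$ to a finite tuple, using finite character of $\dcl$ (resp.\ $\acl$) and of non-forking. Since ``$C$ witnesses internality'' depends only on $\tp(C/A)$ and is preserved under adding parameters, the extension axiom together with an automorphism over $A$ lets us arrange in addition that $C$ is independent over $A$ from any prescribed Morley sequence in $p$ over $A$.

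The heart of the matter is to produce a finite Morley sequence in $p$ over $A$ that by itself witnesses internality. Start with a long Morley sequence $\bar a=(a_i)_{i<\lambda}$ in $p$ over $A$, chosen with $\bar a\ind_A C$. Each $a_i$ realises the non-forking extension of $p$ to $C$, so $a_i\in\dcl(\bar e_i,C_i)$ with $C_i\subseteq C$ finite and $\bar e_i$ a tuple of realisations of $\mathcal{P}$-types based on $C$, all via one fixed $L_A$-formula; for $\lambda$ large an Erd\H{o}s--Rado extraction yields an infinite subsequence along which $(a_i,C_i,\bar e_i)$ is $A$-indiscernible, still a Morley sequence in $p$ over $A$. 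One then shows that a sufficiently long finite initial segment $(a_1,\ldots,a_n)$ witnesses internality: given $a\models p$ with $a\ind_A(a_1,\ldots,a_n)$, move a copy $C'$ of $C$ to be independent from $\bar a a$ over $A$; then $C'$ still witnesses internality, so $a\in\dcl(\bar e',C')$, and using the length and indiscernibility of the sequence one trades $(C',\bar e')$ for $(\,(a_1,\ldots,a_n),\bar e''\,)$ with $\bar e''$ a tuple of realisations of $\mathcal{P}$-types based on $(a_1,\ldots,a_n)$. A more conceptual route runs through the binding group: over a model, the group of automorphisms fixing the model and all $\mathcal{P}$-realisations acts type-definably and transitively on the realisations of the non-forking extension of $p$, and by the descending chain condition on definable subgroups a Morley $n$-tuple in $p$ has trivial pointwise stabiliser for $n$ large, whence every realisation of $p$ lies in the definable closure of that tuple together with $\mathcal{P}$-realisations.

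The main obstacle is precisely the elimination of the auxiliary parameters $C$: one must show that, up to what is relevant for computing realisations of $p$, the set $C$ is absorbed by a Morley sequence in $p$ together with realisations of $\mathcal{P}$-types, and it is here that stability --- definability of types and the descending chain condition on definable subgroups, equivalently the finiteness of the binding-group data --- is used in an essential way.
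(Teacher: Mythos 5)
The paper does not prove this statement at all: it is stated as a \emph{fact} with a reference to Pillay's \emph{Geometric Stability Theory} (Chapter~7, Lemma~4.2), so there is no internal argument to compare yours against. As a blind proof, your two routes both point at the standard argument: the binding-group version (type-definable binding group acting on $p(\mathfrak{C})$ over a model, DCC on type-definable subgroups forcing a finite Morley tuple to have trivial pointwise stabiliser, hence to be a fundamental system) is essentially the proof one finds in the literature and is correct in outline. Your first route is looser: the Erd\H{o}s--Rado extraction gives you an $A$-indiscernible (hence, by stationarity of $p$, Morley) sequence $(a_i,C_i,\bar e_i)$, which is fine, but the decisive step --- ``trading'' $(C',\bar e')$ for the Morley tuple and a new tuple of $\mathcal{P}$-realisations based on it --- is left as an assertion, and that trade is exactly the content of the lemma. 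You correctly flag elimination of the auxiliary parameter set $C$ as the main obstacle; a rigorous version has to argue that, once the Morley tuple is long enough, the relevant part of $C$ (equivalently of the ambient model in the binding-group version) is captured in $\operatorname{dcl}$ of the tuple together with $\mathcal{P}$-realisations, via definability of types or the DCC. So: not a gap in the sense of a wrong idea, but the key step is gestured at rather than carried out in the first route, while the second route does the job.
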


More generally, a stationary type $p \in S(A)$ is said to be $\mathcal{P}$-analysable if there is $a \models q$ and $a= a_n, a_{n-1}, \cdots , a_1$ such that $\tp(a_1/A)$ is almost $\mathcal{P}$-internal, and for all $n > i \geq 1$ we have that $\tp(a_{i+1}/A a_i)$ is stationary, $\mathcal{P}$-internal and $a_i \in \dcl(a_{i+1}A)$.

\begin{definition}

A theory has the canonical base property (CBP) if (possibly working over some parameters) for any tuples $a,b$, if $\tp(a)$ has finite Lascar rank and $b = \Cb(\stp(a/b))$, then $\stp(b/a)$ is almost $\mathcal{P}$-internal, where $\mathcal{P}$ is the family of Lascar rank one types.

\end{definition}

In this last section, we construct a structure without the CBP that is interpretable in $M_{\mu}$ with its 2-nilpotent graded Lie algebra structure. As a consequence, this structure will be CM-trivial:

\begin{definition}

A stable theory $T$ is CM-trivial if whenever $A \subset B$ are parameters and $c$ is a tuple satisfying $\acl^{\mathrm{eq}}(c,A) \cap \acl^{\mathrm{eq}}(B) = \acl^{\mathrm{eq}}(A)$, then $\Cb(\stp(c/A)) \subset \Cb(\stp(c/B))$.

\end{definition}

In particular, our structure will not interpret a field, by a result of Pillay \cite{pillay2000note}. Let us now define it.

Let $Q$ be a sort, given by $M_{\mu}$, and let $P= V_{\mu}$. The second sort $S$ is given (as a set) by $V_\mu^2$. 
We equip $Q = M_{\mu}$ with all its $L$-structure, i.e. its structure of 2-nilpotent graded Lie algebra. Moreover, we equip $S$ with:

\begin{itemize}
\item the projection $\pi : S \rightarrow P$, with $\pi((a,u)) = a$.
\item the group action $(P,+) \times S \rightarrow S$ given by $b * (a,u) = (a, u +b)$.
\item the group law $+ : S^2 \rightarrow S$, given by the addition on $V_\mu$.
\item for any $\emptyset$-definable set $W$ in $V_\mu^{2n}$ given by $\sum\limits_{i<n} [x_i, y_i]=0$, a relation $T_W$ on $S$ given by 
\[\sum\limits_{i<n} [x_i, y_i]=0 \text{ and } \sum\limits_{i<n} \big([u_i, y_i] + [x_i, v_i]\big)=0\]
for $(x_i,u_i)$ and $(y_i,v_i)$ in $S$.

\end{itemize}

We denote this two sorted structure $N$. Remark that the relations $T_W$ are, formally, defined exactly like tangent bundles in an algebraically closed field.

Before proving that $N$ does not have the canonical base property, let us explain why it is CM-trivial. First, the Lie algebra $M_{\mu}$ is CM-trivial, as proven by Baudish in his construction \cite{baudisch1996new}, and therefore so is $M_{\mu}^{\mathrm{eq}}$. Moreover, our structure $N$ is simply a reduct of $M_{\mu}^{\mathrm{eq}}$ (in fact, of $(M_\mu, V_\mu^2)$). By a result of N\"{u}bling \cite{nubling2005reducts}, any reduct of a finite Morley rank CM-trivial theory is also CM-trivial.

The CBP refers to (almost) internality to the family of non-locally modular strongly minimal types. Thus we need to identify this family in $N$. Note that by construction, this structure is 2-analyzable in $P = V_{\mu}$, which is strongly minimal and non-locally modular. Thus $N$ is $\aleph_1$-categorical, and $P$ is the only non-locally modular strongly minimal set, up to non-orthogonality, and we will identify (almost) internality to strongly minimal non-locally modular types and (almost) internality to $P$. 
To prove that $N$ does not have the CBP, the key observation is:

\begin{lemma}

For any graded derivation $f$ of $M_{\mu}$ (i.e. $f(V_{\mu}) \subseteq V_{\mu}$), the map:
\begin{align*}
\sigma_{f} : N & \rightarrow N \\
(a,u) \in S& \rightarrow (a,u+f(a)) \\
b \in Q & \rightarrow b
\end{align*}
\noindent is an automorphism of $N$, fixing $Q$ pointwise by definition.

\end{lemma}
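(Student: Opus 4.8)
The plan is to verify directly that $\sigma_f$ preserves every piece of structure on $N$. Since $\sigma_f$ is the identity on $Q$, the only issue is the interaction with the sort $S$ and the functions and relations connecting $S$ to $P\subseteq Q$. First I would check that $\sigma_f$ is a bijection: its inverse is clearly $\sigma_{-f}$, since $-f$ is again a graded derivation (derivations form a vector space). Next, compatibility with the projection $\pi$ is immediate, as $\sigma_f$ does not change the first coordinate: $\pi(\sigma_f(a,u)) = a = \sigma_f(\pi(a,u))$, using that $\sigma_f$ fixes $P$ pointwise. Compatibility with the group law $+$ on $S$ is just linearity of $f$: $\sigma_f((a,u)+(a',u')) = \sigma_f(a+a', u+u') = (a+a', u+u'+f(a+a'))$, which equals $\sigma_f(a,u)+\sigma_f(a',u')$ precisely because $f(a+a') = f(a)+f(a')$. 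Similarly, compatibility with the action $b*(a,u) = (a,u+b)$ follows since $\sigma_f(b*(a,u)) = (a, u+b+f(a)) = b*\sigma_f(a,u)$, again using that $\sigma_f$ fixes $b\in P$.

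The only substantive point is compatibility with the relations $T_W$, and this is exactly where the Leibniz law enters. Fix $W$ defined by $\sum_{i<n}[x_i,y_i]=0$. Suppose $(x_i,u_i),(y_i,v_i)\in S$ satisfy $T_W$, i.e. $\sum_i[x_i,y_i]=0$ and $\sum_i\big([u_i,y_i]+[x_i,v_i]\big)=0$. Applying $\sigma_f$ replaces $u_i$ by $u_i+f(x_i)$ and $v_i$ by $v_i+f(y_i)$, while the first coordinates $x_i,y_i$ are unchanged. The first condition $\sum_i[x_i,y_i]=0$ is untouched. For the second condition we compute
\[\sum_{i<n}\big([u_i+f(x_i),y_i]+[x_i,v_i+f(y_i)]\big) = \sum_{i<n}\big([u_i,y_i]+[x_i,v_i]\big) + \sum_{i<n}\big([f(x_i),y_i]+[x_i,f(y_i)]\big).\]
The first summand on the right is $0$ by hypothesis. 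For the second summand, the Leibniz law gives $[f(x_i),y_i]+[x_i,f(y_i)] = f([x_i,y_i])$, so $\sum_i\big([f(x_i),y_i]+[x_i,f(y_i)]\big) = f\big(\sum_i[x_i,y_i]\big) = f(0) = 0$, again by the first condition and linearity of $f$. Hence $\sigma_f$ maps tuples satisfying $T_W$ to tuples satisfying $T_W$; applying the same argument to $\sigma_{-f} = \sigma_f^{-1}$ gives the converse, so $T_W$ is preserved.

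Finally I would note that $\sigma_f$ is an $L$-automorphism of $Q = M_\mu$ trivially (it is the identity there), so all of the $2$-nilpotent graded Lie algebra structure on $Q$ is preserved. Collecting these verifications, $\sigma_f$ preserves $\pi$, the action of $(P,+)$, the group law on $S$, every relation $T_W$, and the full structure on $Q$, so it is an automorphism of $N$ fixing $Q$ pointwise. The only genuinely non-formal ingredient is the computation with $T_W$, and even there the work is entirely carried by the Leibniz law together with the constraint $\sum_i[x_i,y_i]=0$ that is built into the definition of $T_W$; this is precisely the analogy with the fact that a derivation of a field induces an automorphism of its tangent bundle, which motivated the definition of the $T_W$.
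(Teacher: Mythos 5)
Your proof is correct and follows essentially the same route as the paper: the only nontrivial point is the preservation of the relations $T_W$, which both you and the paper derive from the Leibniz law combined with the constraint $\sum_i [x_i,y_i]=0$. You are slightly more explicit about bijectivity (observing $\sigma_f^{-1}=\sigma_{-f}$) and about applying the inverse direction to conclude that $T_W$ is preserved rather than merely mapped into itself, but these are minor elaborations of the same argument.
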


\begin{proof}

Let $f$ be a graded derivation and $\sigma_f$ be the associated map. It is obviously a bijective map, so we only have to prove that it preserves the functions and relations of $N$. As it is the identity on  $Q$, it preserves its relations and functions. Preservation of the projection, group law and group action are immediate.

In order to show that the definable sets $T_W$ are preserved, consider a tuple $((a_1,u_1), \ldots , (a_n,u_n),(b_1,v_n), \ldots , (b_n,v_n))\in T_W$, with $W$ given by $\sum\limits_{i=1}^n [x_i, y_i] = 0$. We have:

\begin{align*}
\sum\limits_{i=1}^n \big([u_i + f(a_i), b_i] + [a_i,v_i + f(b_i)]\big) & = \sum\limits_{i=1}^n \big([u_i,b_i] + [a_i, v_i]\big) + \sum\limits_{i=1}^n f([a_i,b_i]) \\
& =  \sum\limits_{i=1}^n  \big([u_i,b_i] + [a_i, v_i]\big) + f\left(\sum\limits_{i=1}^n [a_i,  b_i]\right) \\
& = \sum\limits_{i=1}^n  \big([u_i,b_i] + [a_i, v_i]\big) \\
& = 0
\end{align*}

\noindent and thus \[\big(((a_1,u_1+f(a_1)), \ldots , (a_n,u_n+f(a_n)),(b_1,v_n+f(b_1)), \ldots , (b_n,v_n+f(b_n))\big) \in T_W.\]

\end{proof}

Our previous work allowed us to construct many derivations on $M_{\mu}$, which will thus give rise to automorphisms of $N$ fixing $Q$. The key consequence of this is:

\begin{cor}\label{not-internal}

For any $(a,u) \in S$, the type $\tp(a,u)$ is not almost $P$-internal.

\end{cor}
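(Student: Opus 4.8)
The plan is to argue by contradiction, exploiting the fact that we have constructed a very large supply of derivations of $M_\mu$, hence by the previous lemma a very large supply of automorphisms of $N$ fixing $Q$ pointwise. Suppose $\tp(a,u)$ were almost $P$-internal. By Fact \ref{fun-sys} (applied over a suitable set of parameters over which $\tp(a,u)$ and the strongly minimal non-locally modular type are defined), there is a fundamental system of solutions $\overline{d}$, a Morley sequence in $\tp(a,u)$, such that for any realization $(a',u') \models \tp(a,u)$ independent from $\overline{d}$, we have $(a',u') \in \acl^{\mathrm{eq}}(\overline{d}, \overline{p})$ for some tuple $\overline{p}$ of elements of $P = V_\mu$, all of this over the fixed base parameters. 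The point of the argument is that $P = V_\mu$ lives inside $Q$, and $\overline{d}$ is a finite tuple; so $(a',u')$ is algebraic over $Q$ together with a finite tuple from $S$.

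First I would make the dependence on parameters precise: absorb the (finite, or finitely generated) base parameters and the finite tuple $\overline{d}$ into a finite subspace $B_0 \leq V_\mu$ together with finitely many elements of $S$, say witnessed by $(b_j, w_j)$ for $j$ in a finite index set; call $e$ this finite tuple from $S$. The almost-internality statement then reads: every realization $(a',u')$ of $\tp(a,u)$ independent from this data is algebraic over $Q \cup \{e\}$. Now I would pick a realization $(a',u')$ of $\tp(a,u/\text{base}, \overline{d})$ that is \emph{generic}, in particular with $a'$ not in the (finite-dimensional) subspace spanned by $B_0$ and the first coordinates of $e$; this is possible since $V_\mu$ is strongly minimal, so $a'$ can be taken transcendental over any given finite subset. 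The idea is that changing the fibre coordinate $u'$ by $f(a')$ for a well-chosen derivation $f$ produces another realization of the same type (by the lemma, $\sigma_f$ is an automorphism fixing $Q$, hence fixes the base parameters and $\overline{d}$ if we arrange $f$ to vanish there) while moving $(a',u')$ outside its own algebraic closure over $Q \cup \{e\}$.

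Concretely: using Corollary \ref{der-b-a-f} (extension of partially defined graded derivations), build a graded derivation $f$ of $M_\mu$ with $f$ vanishing on (a strong subspace containing) $B_0$ and on the first coordinates $b_j$ of the tuple $e$, but with $f(a') \neq 0$ — indeed one can prescribe $f(a')$ to be any vector one likes, e.g. $a'$ itself, since $a'$ is linearly independent from that finite subspace. Then $\sigma_f$ fixes $Q$ pointwise, fixes $e$ (because $f(b_j) = 0$, so $\sigma_f(b_j, w_j) = (b_j, w_j)$), and fixes the base parameters and $\overline{d}$; hence $\sigma_f$ maps the $\acl^{\mathrm{eq}}$ of $Q \cup \{e\}$ (over the base and $\overline{d}$) to itself, and sends the realization $(a', u')$ of $\tp(a,u)$ to $(a', u' + f(a'))$, still a realization of the same type. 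But $\sigma_f$ is the identity on $Q \cup \{e\}$, so any conjugate of $(a',u')$ over $Q \cup \{e\}$ is fixed, whereas $(a', u' + f(a')) \neq (a', u')$. This shows $(a', u')$ has infinitely many conjugates over $Q \cup \{e\}$ (vary $f(a')$ over the infinitely many choices, e.g. all scalar multiples, or iterate), contradicting algebraicity over $Q \cup \{e\}$.

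The main obstacle is the bookkeeping of parameters: making sure the derivation $f$ can simultaneously be required to vanish on all the finitely many first coordinates coming from the base parameters, from the fundamental system $\overline{d}$, and from the internality-witnessing tuple $e$, while still being nonzero (indeed freely prescribable) on the chosen generic $a'$ — this is exactly what Corollary \ref{der-b-a-f} buys us, provided $a'$ is chosen linearly independent from that finite subspace, which strong minimality of $V_\mu$ permits. A secondary point requiring care is that $\sigma_f$ must genuinely fix the relevant $\acl^{\mathrm{eq}}$: since $\sigma_f$ is an automorphism of $N$ that is the identity on $Q$ and on $e$, it fixes $\acl^{\mathrm{eq}}(Q \cup \{e\})$ setwise, and fixes $\overline{d}$ and the base pointwise if these too are built from data on which $f$ vanishes — so one should set things up so that $\overline{d}$ and the base parameters lie in $\dcl^{\mathrm{eq}}$ of a finite subspace on which $f$ is zero. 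Once these are in place, the contradiction with algebraicity is immediate.
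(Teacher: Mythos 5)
Your proposal is essentially the paper's own argument: use Fact~\ref{fun-sys} to reduce almost $P$-internality to algebraicity over a finite Morley sequence in $\tp(a,u)$ together with a tuple from $P$, build via Corollary~\ref{der-b-a-f} a graded derivation that vanishes on the first coordinates of that Morley sequence (and the base data) while being freely prescribable on $a$, and observe that the resulting automorphisms $\sigma_f$ of $N$, fixing $Q$ pointwise and the Morley sequence, produce infinitely many conjugates of the point. The only slip is the parenthetical ``(e.g.\ all scalar multiples, or iterate)'': since the field is $\mathbb{F}_q$, scalar multiples of a fixed $f(a')$ or iterates of a single $\sigma_f$ yield only finitely many values, so one must instead vary the target vector $e = f(a')$ over the infinitely many choices independent from the finite data, which is exactly how the paper concludes.
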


\begin{proof}

First note that the structure $N$ is a countable saturated model of its theory because $M_{\mu}$ is a countable saturated model of $T_{\mu}$. 
 
Thus, almost $P$-internality of a type over $\emptyset$ should be witnessed in $N$ since  we are only considering type-definable sets over finite sets of parameters. 

Namely, if $\tp(a,u)$ was almost $P$-internal, there would exist, in $N$, a sequence $(b_1,v_1), \cdots , (b_n,v_n)$ of realizations of $\tp(a,u)$, independent over $(a,u)$, and a tuple $\oc \in P$ such that $(a,u) \in \acl((b_1,v_1), \cdots , (b_n,v_n), \oc)$ (we are using Fact \ref{fun-sys} here). Let us suppose so, and try to derive a contradiction.

A consequence of independence is that $a, b_1, \cdots, b_n$ are independent, as points in $M_{\mu}$. Thus we have $\la a, b_1, \cdots ,b_n \ra \leq V_{\mu}$, and they do not satisfy any Lie algebra equations, meaning $N(\langle a, b_1, \cdots, b_n \rangle ) = \{ 0 \}$. Thus, if $e$ is any non-zero point of $V_{\mu}$, independent from $a,b_1, \cdots , b_n$, we have $\langle a, b_1, \cdots, b_n ,e \rangle \leq V_\mu$ and the linear map

\begin{align*}
f : \langle a, b_1, \cdots, b_n \rangle  & \rightarrow  \langle a, b_1, \cdots, b_n ,e \rangle\\
a & \rightarrow  e \\
b_i & \rightarrow  0
\end{align*}

\noindent gives a partially defined graded derivation. Applying Corollary \ref{der-b-a-f}, we extend $f$ to a graded derivation of $M_{\mu}$, which yields an automorphism $\sigma_{f}$ of $N$, fixing $Q$ pointwise. Moreover, this automorphism fixes $(b_i,v_i)$ for all $i$, and $\sigma_f ((a,u)) = (a,u+e)$. As we can find such an automorphism for any $e \in V_{\mu}$ independent from $a, b_1, \cdots ,b_n$, the orbit of $(a,u)$ under automorphisms fixing $(b_1,v_1), \cdots , (b_n,v_n), \oc$ is infinite, which is a contradiction.

\end{proof}

To prove that our structure does not have the CBP, we will use what is called the "group version" of the CBP, observed by Pillay and Ziegler in \cite{pillay2003jet}:

\begin{prop}\cite[Fact 1.3]{hrushovski2013canonical}\label{group-version-CBP}

Assume $T$ has the CBP. Let $G$ be a definable group, let $a \in G$, and assume that $p = \stp(a/A)$ has finite stabilizer. Then $p$ is almost internal to the family of non-locally modular strongly minimal types. 

\end{prop}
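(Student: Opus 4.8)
The final statement to prove is Proposition~\ref{group-version-CBP}, which is the ``group version'' of the CBP attributed to Pillay and Ziegler, and cited from \cite{hrushovski2013canonical}. Since the paper explicitly cites it as \cite[Fact 1.3]{hrushovski2013canonical}, the ``proof'' here is really a short derivation from the definition of the CBP plus standard facts about stabilizers and canonical bases in stable group theory. Let me sketch how I would prove it.

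\textbf{Setup and plan.} The plan is to reduce the statement to the defining property of the CBP applied to a well-chosen pair of tuples. Let $G$ be a definable group, $a \in G$, and $p = \stp(a/A)$ with finite stabilizer $\mathrm{Stab}(p)$. I want to produce, out of $a$ (over possibly enlarged but harmless parameters), a tuple $b$ with $b = \Cb(\stp(a/b))$ and such that $\stp(b/a)$ controls $\stp(a/A)$ up to the strongly minimal non-locally-modular types; then the CBP applied to $(a,b)$ gives the conclusion. The natural candidate for $b$ is the canonical base $\Cb(\stp(a \cdot a'/A))$ where $a'$ is an independent generic realization of $p$, or more precisely the canonical parameter of the coset $a \cdot \mathrm{Stab}(p)$, using the generic translation action of $G$ on itself.

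\textbf{Key steps.} First I would recall the classical description (Pillay--Ziegler, \cite{pillay2003jet}, or Wagner's stable groups book): working over $A$, take $a' \models p$ with $a' \ind_A a$, set $c = a \cdot a'$, and consider $q = \stp(c/A, a)$ together with $\stp(c/A, a')$. The type $\stp(c/Aa)$ is a translate of $p$, hence a coset of $\mathrm{Stab}(p)$; since $\mathrm{Stab}(p)$ is finite, $\Cb(\stp(c/Aa)) \in \acl^{\mathrm{eq}}(Ac)$ and also $a \in \acl^{\mathrm{eq}}(A, \Cb(\stp(c/Aa)))$ up to the finitely many cosets. Second, by the symmetry of the configuration and the usual canonical-base calculus, one identifies $b := \Cb(\stp(a a'/A))$ (equivalently the canonical parameter of the generic translation taking $p$ to itself, encoded via this triangle) and checks $b = \Cb(\stp(d/b))$ for an appropriate tuple $d$ built from $a, a'$; this is exactly the hypothesis shape ``$b = \Cb(\stp(d/b))$'' needed to invoke the CBP. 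Third, apply the CBP to $(d, b)$: it yields that $\stp(b/d)$ is almost $\mathcal{P}$-internal, with $\mathcal{P}$ the family of Lascar rank one (equivalently, in the $\aleph_1$-categorical reduct, strongly minimal) types. Fourth, transfer internality back to $p$: since $a \in \acl^{\mathrm{eq}}(A,b)$ modulo the finite stabilizer and $b$ is interalgebraic over $A$ with the data used to build $d$, almost $\mathcal{P}$-internality of $\stp(b/\cdot)$ propagates to almost $\mathcal{P}$-internality of $p = \stp(a/A)$. Finally, I would note that ``the family of non-locally modular strongly minimal types'' coincides, up to non-orthogonality and in the relevant finite-rank $\aleph_1$-categorical setting, with the family $\mathcal{P}$ of Lascar rank one types appearing in the definition of the CBP, since locally modular minimal types contribute nothing to genuine almost-internality beyond algebraicity; this is the standard identification the paper already invokes when discussing $N$.

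\textbf{Main obstacle.} The delicate point is the bookkeeping in the second and fourth steps: correctly extracting the tuple to which the CBP is literally applied (something of the form ``$b$ with $b = \Cb(\stp(d/b))$'') from the translation configuration, and then pushing almost-internality back along the finite-to-one correspondence $a \leftrightarrow b$ (mod $\mathrm{Stab}(p)$) without losing stationarity. One must be careful that the CBP as stated is an ``over some parameters'' statement, so one is allowed to absorb $A$ and the finitely many cosets into the base; the finiteness of $\mathrm{Stab}(p)$ is precisely what makes this absorption legitimate and keeps everything almost-internal rather than merely analysable. Since this is a known fact quoted verbatim from \cite[Fact 1.3]{hrushovski2013canonical} (itself due to Pillay--Ziegler), in the paper I would most likely just cite it rather than reproduce the argument; the sketch above is the standard route one would take if a self-contained proof were required.
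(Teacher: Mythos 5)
The paper does not contain a proof of this proposition: it is quoted verbatim from \cite[Fact 1.3]{hrushovski2013canonical} (which in turn attributes it to Pillay and Ziegler), exactly as you anticipate in your closing paragraph. So at the level of ``what the paper does,'' you and the paper agree.

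Since you nonetheless supply a sketch, it is worth flagging a concrete gap in it. Your high-level plan is the right one: translate $p$, use finiteness of the stabilizer to make the canonical base of the translated type interalgebraic over $A$ with the translating element, invoke the CBP, then descend. But you translate by an independent realization $a' \models p$ of the type itself, set $c = a\cdot a'$, and in the descent step you implicitly need $c \ind_A a$ in order to pass from almost $\mathcal{P}$-internality of $\stp(a/Ac)$ (which CBP plus the interalgebraicity gives) back to almost $\mathcal{P}$-internality of $p = \stp(a/A)$. If $c \nind_A a$, then $\stp(a/Ac)$ is a forking extension of $p$, and almost internality of a forking extension says nothing about the base type. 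That independence genuinely fails for non-generic $p$: in $\mathrm{ACF}_0$ with $G=(K^2,+)$ and $p=\stp\bigl((x,x^2)\bigr)$ for $x$ generic, $\mathrm{Stab}(p)$ is trivial, yet $c = a+a' = (x+x',\,x^2+x'^2)$ has $U$-rank $2$ over the prime field (since $xx'$ is definable from $c$) and $U$-rank $1$ over $a$, so $c \nind a$. The standard Pillay--Ziegler argument avoids this by translating by a generic $g$ of $G$ (or of $G^0$) over $Aa$, not by a copy of $p$: then $c = g\cdot a$ is generic over $Aa$, hence $c \ind_A a$; $\Cb\bigl(\stp(c/Ag)\bigr)$ is interalgebraic with $g$ over $A$ because $\stp(c/Ag)=g\cdot p$ and the fibers of $g\mapsto g\cdot p$ are the finitely many cosets of $\mathrm{Stab}(p)$; CBP gives $\stp(g/Ac)$ almost $\mathcal{P}$-internal; and since $a=g^{-1}c\in\dcl(g,c)$ and $c\ind_A a$, almost internality descends to $p$. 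With that single substitution the rest of your outline is sound.
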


We can now prove:

\begin{theorem}

The structure $N$ does not have the CBP.

\end{theorem}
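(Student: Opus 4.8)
The plan is to contradict the CBP by exhibiting, inside the group $S$, a type with finite stabilizer that fails to be almost $P$-internal, thereby contradicting Proposition \ref{group-version-CBP}. Concretely, I would work with the definable subgroup $S_0 = \pi^{-1}(0) = \{0\} \times V_\mu$ of $S$ (the fiber of the projection over $0 \in P$), which, via the group action $*$, is definably isomorphic to $(V_\mu, +)$; more generally each fiber $\pi^{-1}(a)$ is a torsor under $S_0$. Fix a single element $(a,u) \in S$ with $a \ne 0$ and set $p = \stp((a,u)/\emptyset)$ (working over $\emptyset$, since $N$ is a countable saturated model we are free to take the base to be empty as in Corollary \ref{not-internal}). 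The two things to check are: (i) the relevant stabilizer is finite, and (ii) $p$ is not almost $P$-internal — but (ii) is exactly Corollary \ref{not-internal}, so the real content is (i) plus packaging it into the shape demanded by Proposition \ref{group-version-CBP}.

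For the stabilizer computation, I would argue as follows. The element $(a,u)$ together with $\pi$ determines $a \in P = V_\mu$, which is strongly minimal; so $\stp(a/\emptyset)$ has trivial (in particular finite) stabilizer in $(P,+)$. The "extra information" in $(a,u)$ beyond $a$ lives in the $S_0$-torsor $\pi^{-1}(a)$. The point is that an element $b \in P$ stabilizing $\stp((a,u)/\emptyset)$ via the action $*$ would have to stabilize $\stp(a/\emptyset)$ (apply $\pi$), forcing $b$ into a finite — hence, by strong minimality and $b\in P$, trivial unless $a\in\acl(\emptyset)$ — set; and conversely for the genuine part of the stabilizer one checks that if $b * (a,u) = (a, u+b)$ is conjugate to $(a,u)$ over a Morley sequence, independence of the relevant points forces $b = 0$. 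The cleanest route: observe $\mathrm{Stab}(p)$ is a connected definable subgroup of $S$ (or of $P$, depending on which group we use), it is contained in the preimage under $\pi_*$ of $\mathrm{Stab}(\stp(a/\emptyset))=\{0\}$, hence lies in $S_0 \cong V_\mu$; and then a direct argument — precisely the derivation/automorphism argument of Corollary \ref{not-internal}, which shows the orbit of $(a,u)$ under automorphisms fixing any finite strong set is infinite — shows the stabilizer must in fact be trivial, since a nontrivial connected subgroup of the strongly minimal $V_\mu$ is all of $V_\mu$, and $V_\mu$ certainly does not stabilize $p$ (translating $(a,u)$ by a generic $b$ changes its type, e.g. because $u$ and $u+b$ have different types over $a$ when $b$ is generic). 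So $p$ has finite (indeed trivial) stabilizer.

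Having verified both hypotheses, Proposition \ref{group-version-CBP} would force $p = \stp((a,u)/\emptyset)$ to be almost internal to the family of non-locally modular strongly minimal types. But as explained in the paragraph preceding Proposition \ref{group-version-CBP}, in $N$ that family is, up to non-orthogonality, just $\{P\}$, so almost internality to it is the same as almost $P$-internality. This directly contradicts Corollary \ref{not-internal}. Hence $T = \mathrm{Th}(N)$ does not have the CBP.

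The main obstacle I anticipate is item (i): carefully identifying the stabilizer and confirming it is finite. One must be a little careful about \emph{which} group and which stabilizer Proposition \ref{group-version-CBP} refers to (the stabilizer of the \emph{type} $p$ as a subgroup of $G = S$, acting by translation) and make sure the element $(a,u)$ is chosen generically enough that $\mathrm{Stab}(p)$ is honestly finite rather than merely contained in a finite-index subgroup of something; choosing $a$ generic in $V_\mu$ handles this, since then $\stp(a/\emptyset)$ is the generic type of the strongly minimal set and has trivial stabilizer, and the fiber direction is pinned down by the independence bookkeeping already developed for Corollary \ref{not-internal}. Everything else — connectedness of the stabilizer, the reduction of "non-locally modular strongly minimal" to $P$, and the non-internality input — is either standard superstability theory or already established in the excerpt.
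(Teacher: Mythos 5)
Your reduction to Proposition~\ref{group-version-CBP} plus Corollary~\ref{not-internal} is the right frame, and is exactly what the paper does; but your stabilizer computation for a single element $(a,u)\in S$ is wrong, and this is precisely the step where the paper's actual content lives.

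The error is the claim that ``$\stp(a/\emptyset)$ has trivial (in particular finite) stabilizer in $(P,+)$'' because $P=V_\mu$ is strongly minimal. That is the opposite of what strong minimality gives you. For a generic $a$ of a connected definable group $G$ (and $(V_\mu,+)$ is connected: it is an $\mathbb{F}_q$-vector space, so it has no proper definable subgroups of finite index), the stabilizer of the generic type is the \emph{whole} group $G^0=G$: if $b$ is independent from a generic $a$, then $a+b$ is again generic over $b$, and since $V_\mu$ is strongly minimal, all generics have the same strong type, so $a+b\equiv a$ and every $b$ stabilizes. Consequently, $\mathrm{Stab}(\stp(a/\emptyset))$ is all of $V_\mu$ for generic $a$, and the whole chain of inferences that follows (that $\mathrm{Stab}(p)\subseteq\pi_*^{-1}(\{0\})=S_0$, etc.) collapses. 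The only way to make $\mathrm{Stab}(\stp(a/\emptyset))$ finite is to take $a\in\acl(\emptyset)$, and then the analogous problem recurs one level up: if $u$ is generic over $a$, the stabilizer of $\stp((a,u))$ contains $\{0\}\times V_\mu$, which is still infinite. In general, a single point of $S$ of maximal rank has a generic stabilizer, and a point of smaller rank becomes (essentially) algebraic; there is no choice of a single $(a,u)$ that produces a nontrivial type of finite stabilizer.

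This is exactly why the paper works instead with a $4$-tuple $((a,u),(b,v),(c,r),(d,s))$ chosen generic on the definable set $T_W\subset S^4$, where $W$ is $[x,z]+[y,w]=0$. The tangent-bundle-type constraint $T_W$ is not a coset of a subgroup, and cutting down to a generic point of it is precisely what kills the stabilizer in $S^4$: the displayed linear relations force any potential stabilizing tuple to be zero, first in the $P$-coordinates and then, using two independent stabilizing tuples and the torsor structure, in the fibre coordinates as well. So the needed step is not ``bookkeeping around Corollary~\ref{not-internal}'' as you anticipate, but the substitution of the constrained $T_W$-generic for an unconstrained single element; without it, the hypothesis of Proposition~\ref{group-version-CBP} simply does not hold.
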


\begin{proof}

Using Proposition \ref{group-version-CBP}, we need to find a tuple $((a_1,u_1),\ldots,(a_n,u_n)) \in S^n$ such that $\tp(((a_1,u_1),\ldots,(a_n,u_n))$ has finite stabilizer for the group law of $S$. Indeed, such a type is never $P$-internal, by Corollary \ref{not-internal}.

To do so, we will, \emph{mutatis mutandis}, use the proof of Theorem 3.7 in \cite{hrushovski2013canonical}. For the convenience of the reader, we repeat the proof of Hrushovski, Palac\'{i}n and Pillay here.

Fix $(a,u),(b,v),(c,r),(d,s)$ generic points of $T_W$, where $W$ is the definable set given by $ \{(x,y,z,w) \in P^4, [x, z] + [y, w] = 0 \}$. Let $q = \tp((a,u),(b,v),(c,r),(d,s))$. Concretely, this means that we have the following equalities: 

\begin{align*}
[a , c] + [b , d] & = 0 \\
[u , c ]+ [a , r] + [v , d] + [b , s] & = 0
\end{align*}

We will prove that $\mathrm{Stab}(q)$ is trivial. First, we show that $\mathrm{Stab}(a,b,c,d)$ is trivial in $(V_{\mu},+)^4$. Suppose that $e_1,e_2,e_3,e_4 \in \mathrm{Stab}(a,b,c,d)$, and is independent from $a,b,c,d$. Then we have:

\begin{align*}
[a+e_1 , c+e_3] + [b + e_2 , d + e_4] = 0
\end{align*}

\noindent which simplifies into:

\begin{align*}
[a , e_3] + [e_1 , c] + [e_1 , e_3] + [b , e_4] + [e_2 , d] +[ e_2 , e_4] = 0
\end{align*}

\noindent which contradicts the independence assumption, unless $e_i = 0$ for $i = 1, \cdots ,4$. Thus $\mathrm{Stab}(a,b,c,d)$ is trivial.

Hence any element in the stabilizer of $q$ is of the form $(0,x),(0,y),(0,w),(0,z)$. Pick such a tuple, independent from $(a,u),(b,v),(c,r),(d,s)$.
Then we have:

\begin{align*}
[u+x, c] + [a , r+w] + [v+y , d] + [b , s+z]  = 0
\end{align*}

\noindent giving us:

\begin{align*}
[x , c] + [a , w] + [y , d] + [b , z] = 0.
\end{align*}

If $x,y,z,w$ were elements of $V_{\mu}$, independent over $a,b,c,d$, we could directly conclude that $x=y=z=w = 0$. However, this is not the case, as in the structure $N$, only the tuples $(0,x),(0,y),(0,z),(0,w)$ exist. We will now find elements $x'',y'',z'',w'' \in V_{\mu}$ satisfying the same equation and independence.

Let $(0,x'),(0,y'),(0,w'),(0,z')$ be another tuple in the stabilizer of $q$, this time independent from both $(a,u),(b,v),(c,r),(d,s)$ and $(0,x),(0,y),(0,w),(0,z)$. We similarly obtain:
\begin{align*}
[x' , c] + [a , w'] + [y' , d] + [b , z'] = 0.
\end{align*}

Pick $x'',y'',w'',z'' \in P$ such that $x'' * (0,x) = (0,x')$ (i.e. $x'' = x' - x$ in the full structure), and similarly for $y,w$ and $z$. Again we have:
\begin{align*}
[x'' , c] + [a , w''] + [y'' , d] + [b , z''] =0.
\end{align*}

A quick forking computation yields that $x'',y'',w'',z''$ are independent from $a,b,c,d$ in $Q = M_{\mu}$. This yields $x'' = y'' = w'' = z'' = 0$, and thus $(x,y,w,z) = (x',y',w',z')$. As $(0,x),(0,y),(0,w),(0,z)$ and $(0,x'),(0,y'),(0,w'),(0,z')$ are independent over the empty set, this implies that $\mathrm{Stab}(q)$ is trivial.

\end{proof}

\begin{remark}

It is unclear how many different, up to interpretability, new theories without the CBP this construction yields. Indeed, for any $q > 2$ and good map $\mu$, we obtain a structure $N = N_{q,\mu}$. There are countably many choices for $q$, and uncountably many for $\mu$, once $q$ is fixed. It remains to be seen if any $N_{q,\mu}$ can be interpreted in another. 

\end{remark}

\bibliography{biblio}

\begin{thebibliography}{10}

\bibitem{baudisch1996new}
Andreas Baudisch.
\newblock A new uncountably categorical group.
\newblock {\em Transactions of the American Mathematical Society},
  348(10):3889--3940, 1996.

\bibitem{baudisch2009additive}
Andreas Baudisch.
\newblock The additive collapse.
\newblock {\em Journal of Mathematical Logic}, 9(02):241--284, 2009.

\bibitem{baudisch2006fusion}
Andreas Baudisch, Amador Martin-Pizarro, and Martin Ziegler.
\newblock Fusion over a vector space.
\newblock {\em Journal of Mathematical Logic}, 6(02):141--162, 2006.

\bibitem{baudisch2007red}
Andreas Baudisch, Amador Martin-Pizarro, and Martin Ziegler.
\newblock Red fields.
\newblock {\em Journal of Symbolic Logic}, pages 207--225, 2007.

\bibitem{hrushovski1993new}
Ehud Hrushovski.
\newblock A new strongly minimal set.
\newblock {\em Annals of pure and applied logic}, 62(2):147--166, 1993.

\bibitem{hrushovski2013canonical}
Ehud Hrushovski, Daniel Palac{\'\i}n, and Anand Pillay.
\newblock On the canonical base property.
\newblock {\em Selecta Mathematica}, 19(4):865--877, 2013.

\bibitem{loesch2021possibly}
Michael Loesch.
\newblock A (possibly new) structure without the canonical base property.
\newblock {\em arXiv preprint arXiv:2103.11968}, 2021.

\bibitem{moosa2008canonical}
Rahim Moosa and Anand Pillay.
\newblock On canonical bases and internality criteria.
\newblock {\em Illinois Journal of Mathematics}, 52(3):901--917, 2008.

\bibitem{nubling2005reducts}
Herwig N{\"u}bling.
\newblock Reducts of stable, {CM}-trivial theories.
\newblock {\em Journal of Symbolic Logic}, pages 1025--1036, 2005.

\bibitem{pillay1996geometric}
Anand Pillay.
\newblock {\em Geometric stability theory}.
\newblock Number~32. Oxford University Press, 1996.

\bibitem{pillay2000note}
Anand Pillay.
\newblock A note on {CM}-triviality and the geometry of forking.
\newblock {\em The Journal of Symbolic Logic}, 65(1):474--480, 2000.

\bibitem{pillay2001remarks}
Anand Pillay.
\newblock Remarks on a theorem of {Campana and Fujiki}.
\newblock {\em preprint}, 2001.

\bibitem{pillay2003jet}
Anand Pillay and Martin Ziegler.
\newblock Jet spaces of varieties over differential and difference fields.
\newblock {\em Selecta Mathematica}, 9(4):579--599, 2003.

\end{thebibliography}
\bibliographystyle{plain}

\end{document}